\documentclass[reqno,12pt]{amsart}

\usepackage[utf8]{inputenc}
\usepackage{microtype}
\usepackage{amsfonts}
\usepackage{amsmath}
\usepackage{graphicx}
\usepackage{float}
\usepackage[dvipsnames]{xcolor}
\usepackage{hyperref}
\usepackage{tikz-cd}
\usepackage{subcaption}
\usepackage{orcidlink}
%\usepackage{academicons}

% AMS related packages:

\usepackage{amssymb}
\usepackage{enumitem}
\usepackage{mathrsfs}

\usepackage{tikz}
\usetikzlibrary{topaths}
\usetikzlibrary{calc}

% Layout
\usepackage{a4wide}

\usepackage{empheq}

% Theorem definitions:

\newtheorem{theorem}{Theorem}[section]
\newtheorem{lemma}[theorem]{Lemma}
\newtheorem{proposition}[theorem]{Proposition}
\newtheorem{corollary}[theorem]{Corollary}

\newtheorem*{theoremA}{Theorem~\ref{thrm:main}}

\theoremstyle{definition}
\newtheorem{definition}[theorem]{Definition}
\newtheorem*{definition*}{Definition}
\newtheorem{remark}[theorem]{Remark}
\newtheorem{example}[theorem]{Example}

\newtheorem{construction}[theorem]{Construction}
\newtheorem*{construction*}{Construction}

\newcommand{\R}{\mathbb{R}}

\newcommand{\Cfixed}{\underline{C}}
\newcommand{\kappafixed}{\underline{\kappa}}

\DeclareMathOperator{\Aut}{Aut}
\DeclareMathOperator{\BAut}{bAut}
\DeclareMathOperator{\PBAut}{pbAut}
\DeclareMathOperator{\UConf}{UConf}
\DeclareMathOperator{\Conf}{Conf}

\numberwithin{equation}{section}

%
% ====================================================================
%

\begin{document}

\title{Configuration spaces of circles in the plane}
\date{\today}
\subjclass[2020]{Primary 55R80; %configuration spaces
                 Secondary 20F65,20F36} %geometric group theory, braid groups

\keywords{Configuration space, braid group, tree automorphism, fundamental group}

\author[J.~M.~Curry]{Justin M.~Curry*}
\address{Department of Mathematics and Statistics, University at Albany (SUNY), Albany, NY \\ ORCiD: 0000-0003-2504-8388 \orcidlink{0000-0003-2504-8388} * indicates corresponding author}
\email{jmcurry@albany.edu}

\author[R.~C.~Gelnett]{Ryan C.~Gelnett}
\address{Department of Mathematics and Statistics, University at Albany (SUNY), Albany, NY}
\email{rgelnett@albany.edu}

\author[M.~C.~B.~Zaremsky]{Matthew C.~B.~Zaremsky}
\address{Department of Mathematics and Statistics, University at Albany (SUNY), Albany, NY}
\email{mzaremsky@albany.edu}

\begin{abstract}
We consider the space of all configurations of finitely many (potentially nested) circles in the plane. We prove that this space is aspherical, and compute the fundamental group of each of its connected components. It turns out these fundamental groups are obtained as iterated semidirect products of subgroups of braid groups, with the structure for each component dictated by a finite rooted tree. These groups can be viewed as ``braided'' versions of the automorphism groups of such trees. We also discuss connections to statistical mechanics, topological data analysis, and geometric group theory.
\end{abstract}

\maketitle
\thispagestyle{empty}

\section{Introduction}

Configuration spaces are fundamental objects of study, with rich topological and algebraic properties.
The prototypical configuration space---of $n$ indistinguishable points in the plane, written $\UConf_n(*,\R^2)$ here---has been studied since at least the 1890s\footnote{Perhaps unsurprisingly, Gauss is credited with work on braids dating back to the 1820s \cite{epple2013creation,friedman2019mathematical}.}, when Hurwitz considered the motion of points in the complex plane in order to identify group actions on more general Riemann surfaces \cite{hurwitz1891riemann}.
Fixing start and end configurations and studying how trajectories wind around each other (Figure \ref{fig:braid}) leads naturally to the fundamental group of $\UConf_n(*,\R^2)$, which was presented algebraically by Artin in terms of the \emph{braid group} $B_n$~\cite{artin1925theorie,artin1947theory}.
Studying configurations of points in more general settings---arbitrary manifolds \cite{fadell1962configuration}, graphs \cite{abrams2000configuration}, and general stratified spaces \cite{petersen2017spectral}---leads directly to some of the deepest ideas in modern mathematics; see \cite{knudsen2018configuration,kallel2025configuration} for surveys.
Additionally, these ideas are finding new importance in modern engineering, with motion planning and robotics as particularly significant benefactors~\cite{bhattacharya2018path,mavrogiannis2020decentralized,kasaura2023homotopy}.

In this paper we consider a simple variation of $\UConf_n(*,\R^2)$ that studies configurations of circles instead of points. The resulting space of $n$ unlabeled and non-intersecting circles in the plane, written $\UConf_n(S^1,\R^2)$ here, is already more complex than its prototype because it is topologically disconnected. Indeed, any path between configurations of circles must preserve their so called nesting structure, which we now explain. By the Jordan Curve Theorem, any circle divides the plane into two connected components: one bounded (the \emph{inside}) and one unbounded (the \emph{outside}). Given two disjoint circles $C_1$ and $C_2$, $C_1$ is either contained in the inside or the outside of $C_2$. In the former case, as in Figure~\ref{fig:nested-v-unnested}~(A), we say that $C_1$ is \emph{nested} inside $C_2$. If neither circle is nested in the other, as in Figure~\ref{fig:nested-v-unnested}~(B), we say that $C_1$ and $C_2$ are \emph{un-nested}. 
This endows every configuration of $n$ circles with a circle containment order \cite{scheinerman1988circle}, also called a nesting poset structure in \cite{catanzaro2020moduli}, which can be encoded in a tree $T$ with one root and $n$ other vertices---an observation well understood by students of the \emph{little disks operad}, cf.~\cite[\S 5.3.1]{idrissireal}.
As a brief aside, we note that beyond operads, trees also arise naturally in the Fulton--MacPherson compactification of $\UConf_n(*,\R^2)$; see \cite{fulton94}, and also \cite{sinha04} for a more modern perspective.
We expect that the simultaneous appearance of these trees is not a coincidence, but leave the precise connection for future research.
For us, the tree $T$ simply encodes the containment order of circles, and a path in $\UConf_n(S^1,\R^2)$ cannot change this containment order, thus making the tree $T$ an invariant of a connected component.
See Section~\ref{sec:circles_and_trees} for more details. 
As a final remark, moving up a dimension, the space $\UConf_n(S^1,\R^3)$ of configurations of $n$ circles in $3$-space is a well-studied object, related to so called loop braid groups, see for example \cite{goldsmith81,baez07,brendle13,damiani17}. Also see \cite{boyd2025embedding, boyd2025embeddingspacehopflink, damiani2019groupringmotionshtrivial} for more in this direction.

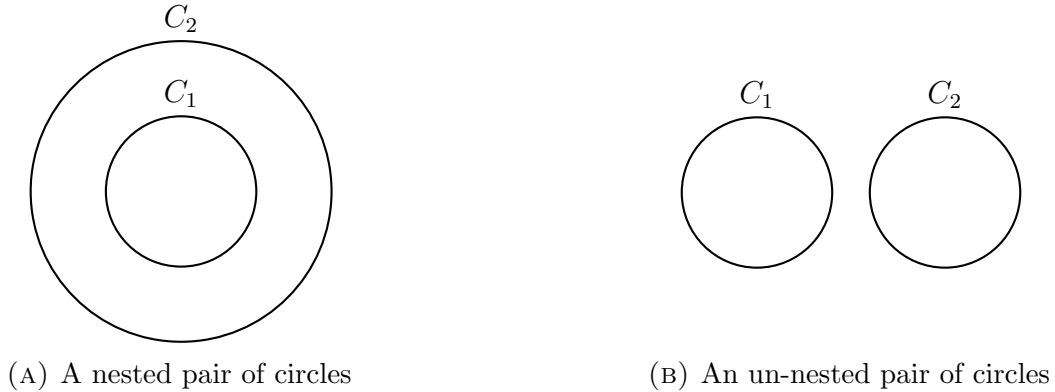
\begin{figure}
    \centering
    % First subfigure with labeled concentric circles
    \begin{subfigure}[t]{0.45\textwidth} % Align at the top
        \centering
        \begin{tikzpicture}
            % Draw the first circle with radius 1
            \draw[thick] (0,0) circle (1);
            % Label for the inner circle
            \node at (0,1.3) {\(C_1\)};

            % Draw the second circle with radius 2
            \draw[thick] (0,0) circle (2);
            % Label for the outer circle
            \node at (0,2.3) {\(C_2\)};
        \end{tikzpicture}
        \caption{A nested pair of circles}
        \label{fig:concentric-circles}
    \end{subfigure}
    \hfill
    % Second subfigure with side-by-side circles, aligned to match the height
    \begin{subfigure}[t]{0.45\textwidth} % Align at the top
        \centering
        \begin{tikzpicture}[baseline={(0,-2cm)}] % Use baseline adjustment to raise subfigure
            % Draw the first circle centered at (-1, 0) with radius 0.75
            \draw[thick] (-1.25,0) circle (1);
            % Label for the left circle
            \node at (-1.25,1.3) {\(C_1\)};

            % Draw the second circle centered at (1, 0) with radius 0.75
            \draw[thick] (1.25,0) circle (1);
            % Label for the right circle
            \node at (1.25,1.3) {\(C_2\)};
        \end{tikzpicture}
        \caption{An un-nested pair of circles}
        \label{fig:side-by-side-circles}
    \end{subfigure}
    \caption{Two configurations in different components of $\UConf_2(S^1,\R^2)$.}
    \label{fig:nested-v-unnested}
\end{figure}

\medskip

Our main result in this paper, Theorem~\ref{thrm:main}, includes the first (to our knowledge) explicit calculation of the fundamental group of each connected component of $\UConf_n(S^1,\R^2)$. In the statement of this theorem, $\UConf_T(S^1,\R^2)$ is the connected component of $\UConf_n(S^1,\R^2)$ corresponding to the tree $T$, and $\BAut(T)$ is the iterated semidirect product of braid groups that serves as a ``braided'' version of $\Aut(T)$; see Definitions~\ref{def:Tconfigs} and~\ref{def:BAut} for more.

{\renewcommand*{\thetheorem}{\Alph{theorem}}
\begin{theorem}\label{thrm:main}
Let $T$ be a tree with $n$ non-root vertices. Then $\UConf_T(S^1,\R^2)$ is aspherical, and its fundamental group is isomorphic to $\BAut(T)$, hence it is a $K(\BAut(T),1)$.
\end{theorem}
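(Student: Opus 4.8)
The plan is to argue by induction on the number $n$ of non-root vertices, exploiting the recursive structure of $T$ together with the matching recursive structure of $\BAut(T)$ from Definition~\ref{def:BAut}. The base case $n\le 1$ is immediate: the empty configuration is a point, and a single circle is determined by its center and radius, so $\Conf_T(S^1,\R^2)\cong\R^2\times\R_{>0}$ is contractible, while $\BAut(T)$ is trivial. For the inductive step, let the root of $T$ have children $c_1,\dots,c_m$ with subtrees $T_1,\dots,T_m$, grouped into isomorphism types with multiplicities $\vec n=(n_1,\dots,n_\ell)$. The key geometric input is a ``forget the contents'' map $p\colon\Conf_T(S^1,\R^2)\to B$ that remembers only the $m$ outermost circles (the children of the root), retaining the isomorphism type of each circle's interior as a ``color'' but discarding the interior configuration itself; here $B$ is the space of $m$ pairwise un-nested, colored circles in the plane.

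First I would show that $p$ is a fiber bundle whose fiber over a configuration of outer circles is the product $\prod_{j=1}^m\Conf_{T_j}(S^1,\R^2)$, where each factor records the sub-configuration living inside the corresponding outer disk. Local triviality should follow from a scaling construction: every round disk is carried to a standard one by a radial rescaling depending continuously on its center and radius, so interior configurations can be transported continuously as the outer circles move and resize; this is the analogue here of the Fadell--Neuwirth fibration. Next I would record the homotopy type of the base: $B$ deformation retracts onto the configuration space of $m$ colored points in the plane, which is aspherical, with fundamental group the colored braid group $B_{\vec n}$ (the preimage of $\Sym_{n_1}\times\cdots\times\Sym_{n_\ell}$ under $B_m\to\Sym_m$). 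By the inductive hypothesis each $\Conf_{T_j}(S^1,\R^2)$ is aspherical with $\pi_1\cong\BAut(T_j)$, hence so is the fiber, with $\pi_1\cong\prod_{j=1}^m\BAut(T_j)$.

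Feeding these into the long exact sequence of $p$ does two jobs at once. Asphericity of both base and fiber forces $\pi_k(\Conf_T(S^1,\R^2))=0$ for all $k\ge 2$, so the total space is aspherical. At the level of fundamental groups the sequence collapses to a short exact sequence $1\to\prod_{j}\BAut(T_j)\to\pi_1(\Conf_T(S^1,\R^2))\to B_{\vec n}\to 1$, in which $B_{\vec n}$ acts on the normal subgroup by permuting the factors attached to isomorphic subtrees, exactly as in the braided wreath products assembling $\BAut(T)$. Finally I would split this extension by exhibiting a section $s\colon B\to\Conf_T(S^1,\R^2)$ of $p$: inside each outer disk insert a fixed standard miniature configuration of the prescribed isomorphism type, rescaled to fit, varying continuously with the outer circle and invariant under interchange of equally colored circles. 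The induced splitting identifies $\pi_1$ with the semidirect product $\bigl(\prod_j\BAut(T_j)\bigr)\rtimes B_{\vec n}$, which is $\BAut(T)$ by Definition~\ref{def:BAut}, completing the induction.

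The main obstacle I anticipate is the first step: verifying rigorously that $p$ is a genuine fibration (indeed a fiber bundle) rather than merely a surjection with the expected fibers. The difficulty is that the ``room'' available inside an outer circle changes as it is resized and as it slides past its neighbors, so the local trivializations must be chosen to vary continuously and compatibly with the unordered, colored nature of the circles; packaging the radial-rescaling homeomorphisms into honest bundle charts is where the real work lies. A secondary, bookkeeping obstacle is matching the extension produced by the long exact sequence with the precise iterated-semidirect-product description of $\BAut(T)$ in Definition~\ref{def:BAut}, in particular checking that the colored braid group $B_{\vec n}$ and its permutation action on the factors $\BAut(T_j)$ coincide with the braided wreath structure used there.
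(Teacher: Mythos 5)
Your proposal follows essentially the same route as the paper: induct on the tree, fiber $\Conf_T(S^1,\R^2)$ over the space of top circles by forgetting interiors, prove local triviality by a rescaling/translation construction, and use the long exact sequence together with a section (inserting standard rescaled configurations into the top disks) to identify $\pi_1$ with $\bigl(\prod_j\BAut(T_j)\bigr)\rtimes B_m^\Pi=\BAut(T)$. The only real difference is bookkeeping: you color the base by subtree type so that the fiber is connected and the base has $\pi_1\cong B_{\vec n}=B_m^\Pi$ directly, whereas the paper fibers over the uncolored space $\Conf_\Lambda(S^1,\R^2)$ with $\pi_1\cong B_m$ and a disconnected fiber, recovering $B_m^\Pi$ as the image of $\pi_1$ via exactness at the $\pi_0$ term of the long exact sequence.
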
}

Here, recall that a topological space $X$ is \emph{aspherical} if $\pi_k(X)=0$ for all $k\ne 1$, and in this case it is called a \emph{$K(G,1)$} for its fundamental group $G$.

Studying configurations of circles in the plane can be motivated from multiple perspectives, beyond the intrinsic interest in understanding how configurations such as those in Figure~\ref{fig:react-viz} can be deformed topologically.
Below we highlight three classes of outside motivators, coming from 
(1) statistical mechanics and representation stability,
(2) topological data analysis and deep learning, and 
(3) geometric group theory and methods of ``braiding'' groups that are intrinsically permutational.

\begin{figure}[htb]
  \centering
  \includegraphics[width=\textwidth]{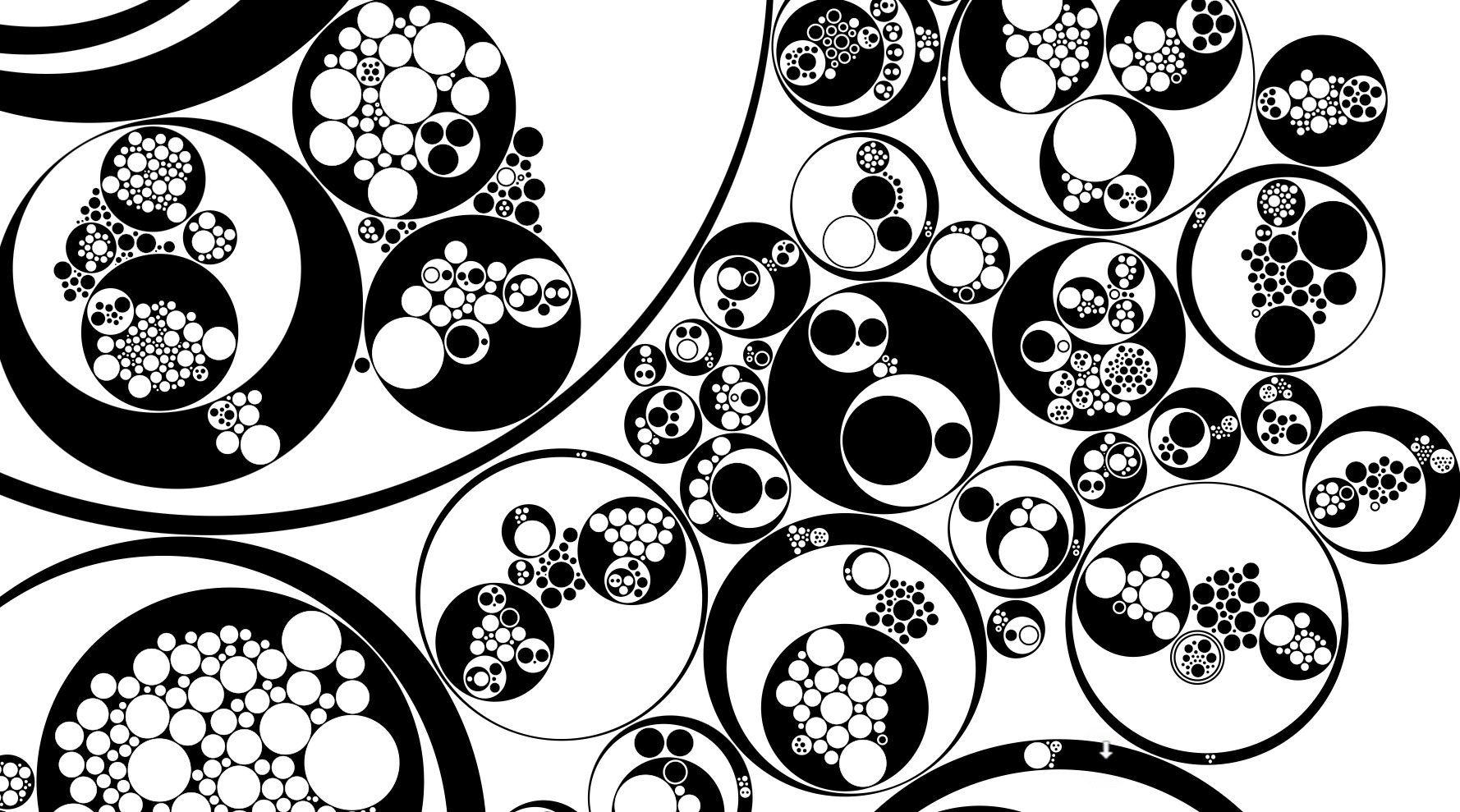}
  \caption{The directory tree of React, a Javascript library, viewed as a configuration of nested circles; from \cite{CurranViz}.}
  \label{fig:react-viz}
\end{figure}

\subsection{Connections: statistical mechanics and representation stability}\label{ssec:stat_mech}

Starting in \cite{carlsson2012computational}, Carlsson, Gorham, Kahle, and Mason considered a similar variation on the configuration space of points as our own, by replacing points with disks of fixed radius $r$ and replacing $\R^2$ with the unit square. 
By varying the radius $r$ and the number $n$ of disks, the authors of \cite{carlsson2012computational} used methods from computational topology to provide evidence for the hypothesis that phase transitions in physics can be understood in terms of topological changes of these configuration spaces.
Specifically, when $r< \frac{1}{2n}$ each disk can be contracted to a point without changing the topology \cite[Theorem~5.1]{baryshnikov2014min}---thus obtaining a ``gaseous'' configuration of points in a box---but for larger values of $r$ or $n$ this space can be disconnected, by virtue of jammed, ``solid'' configurations\footnote{See \cite{carlsson2012computational} and \cite{kahle2012sparse} for illustrations of these jammed configurations.}, thus providing an important class of topologically distinct configuration spaces.
Subsequent work \cite{kahle2012sparse,alpert2021configuration,alpert2023homology,alpert2024asymptotic}, which also varies the shape of the container and the objects placed inside it, points toward precise ``homological gas/liquid/solid regimes'' for varying $r$ and $n$, with intriguing connections to representation stability \cite{alpert2024configuration}.

\subsection{Connections: topological data analysis, deep learning and sensor networks}\label{ssec:tda}

Another motivation for our paper comes from a desire to quantify the expressive power of methods arising in applied topology.
In Topological Data Analysis (TDA), one encounters the Reeb graph and the persistence barcode as featurization tools for shape analysis. 
The Reeb graph \cite{kronrod1950functions,reeb1946points, sharko2006kronrod} is a graph that tracks the evolution of connected components ($\pi_0$) of a space when viewed along a suitably tame function $f\colon X\to \R$.
The barcode \cite{carlsson2004persistence}---originally known as the persistence diagram \cite{edelsbrunner2002topological}---measures the evolution of homology with field coefficients ($H_k$) when viewed along a similarly nice function.
When $k=0$, and under certain technical assumptions \cite{desha2021inverse}, one can view the barcode as a ``linearized'' version of the Reeb graph.
However, as observed in \cite{catanzaro2020moduli}, both the Reeb graph and the barcode are insensitive to certain embedding information, as illustrated in Figures~\ref{fig:nested-spheres} and~\ref{fig:braided-spheres}.

\begin{figure}[htb]
  \centering
  \includegraphics[width=\textwidth]{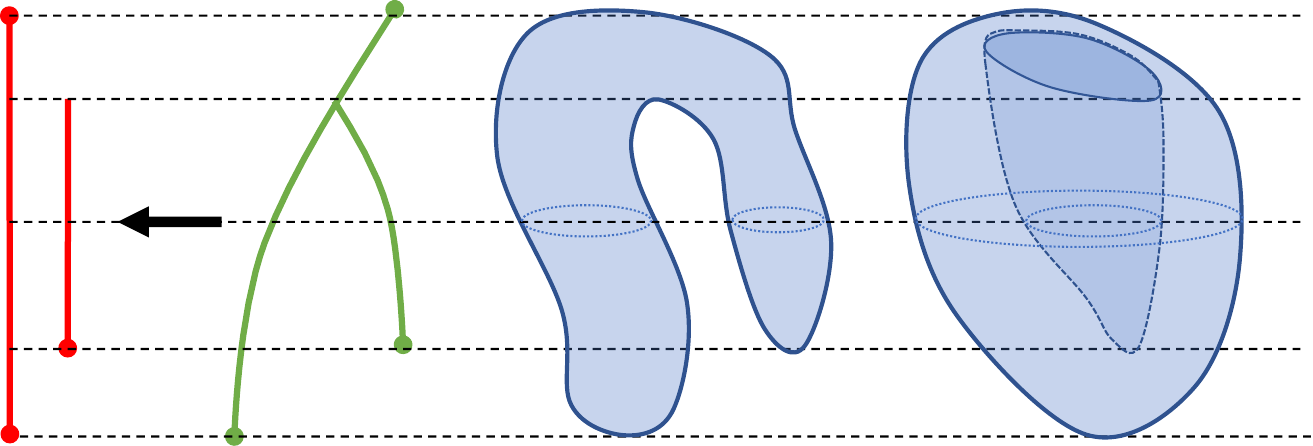}
  \caption{Two Embedded Spheres with the same Reeb Graph}
  \label{fig:nested-spheres}
\end{figure}

In Figure~\ref{fig:nested-spheres} one sees two embedded spheres, each with one maximum, one saddle, and two minima, when viewed along the $z$-axis.
Slicing at the indicated regular value shows two different nesting relationships for the embedded circles.
In \cite{catanzaro2020moduli}, the question of how to enrich the Reeb graph in a way that records this nesting information was already considered.
However, one thing that our paper offers is the ability to also calculate braiding relationships of circles when scanning a $2$-manifold that is tamely embedded inside $\R^3$.
Figure~\ref{fig:braided-spheres} shows three examples, where the rightmost two have the same nesting relationship at every regular value. 
We speculate that augmenting the Reeb graph in these examples to include $\pi_1$ information could provide a new set of invariants for manifolds that are related by a $z$-level set preserving isotopy.

It should be noted that a variant of both the Reeb graph and the nesting poset called ``the tree of shapes'' \cite{ballester2003tree, caselles2009geometric,wu2022algorithm} has been used by the computer vision community to track nesting relationships among objects in an image.
Indeed, many image segmentation algorithms proceed by finding steep changes in contrast (viewed as $||\nabla I||$ for $I$ some pixel intensity function) to identify the boundaries of objects.
Heuristically, well-known results from differential topology, such as the Pre-Image Theorem \cite[\S4]{guillemin2010differential}, provide theoretical guarantees that these boundaries should specify a topological arrangement of circles.
Continuing this line of thought and motivated by current paradigms of geometric and topological deep learning \cite{bronstein2017geometric,bronstein2021geometric,papamarkou2024positiontopologicaldeeplearning}, one might ask for equivariance properties in a neural net whose input is an image and whose output is a tree of shapes. 
Our work provides a calculation of the groups that one can deform an image by and retain the same segmentation.

Finally, we note that our computation of $\pi_1(\UConf_T(S^1,\R^2))$ should be applicable to the so called ``pursuit-evasion problem'' for mobile sensor networks \cite{de2006coordinate} by following recent work by Gunnar Carlsson and co-authors \cite{carlsson2020space,carlsson2022evasion}, which computes $\pi_0$ of the space sections of a submersion $f\colon Z\to Y$ when $Y=[0,1]$ or $Y=S^1$ in terms of $\pi_0$ and $\pi_1$ of the fiber.
Since we can think of sensor networks as covering some (potentially disconnected) two-manifold with boundary in $\R^2$, the complement (where intruders are allowed) should retract onto a configuration of circles in the plane. 
The map to $Y$ then encodes how this configuration evolves in time and our computation of $\pi_1$ offers a potential classification result for mobile sensor networks along with their evasion paths.

\begin{figure}[htb]
  \centering
  \includegraphics[width=\textwidth]{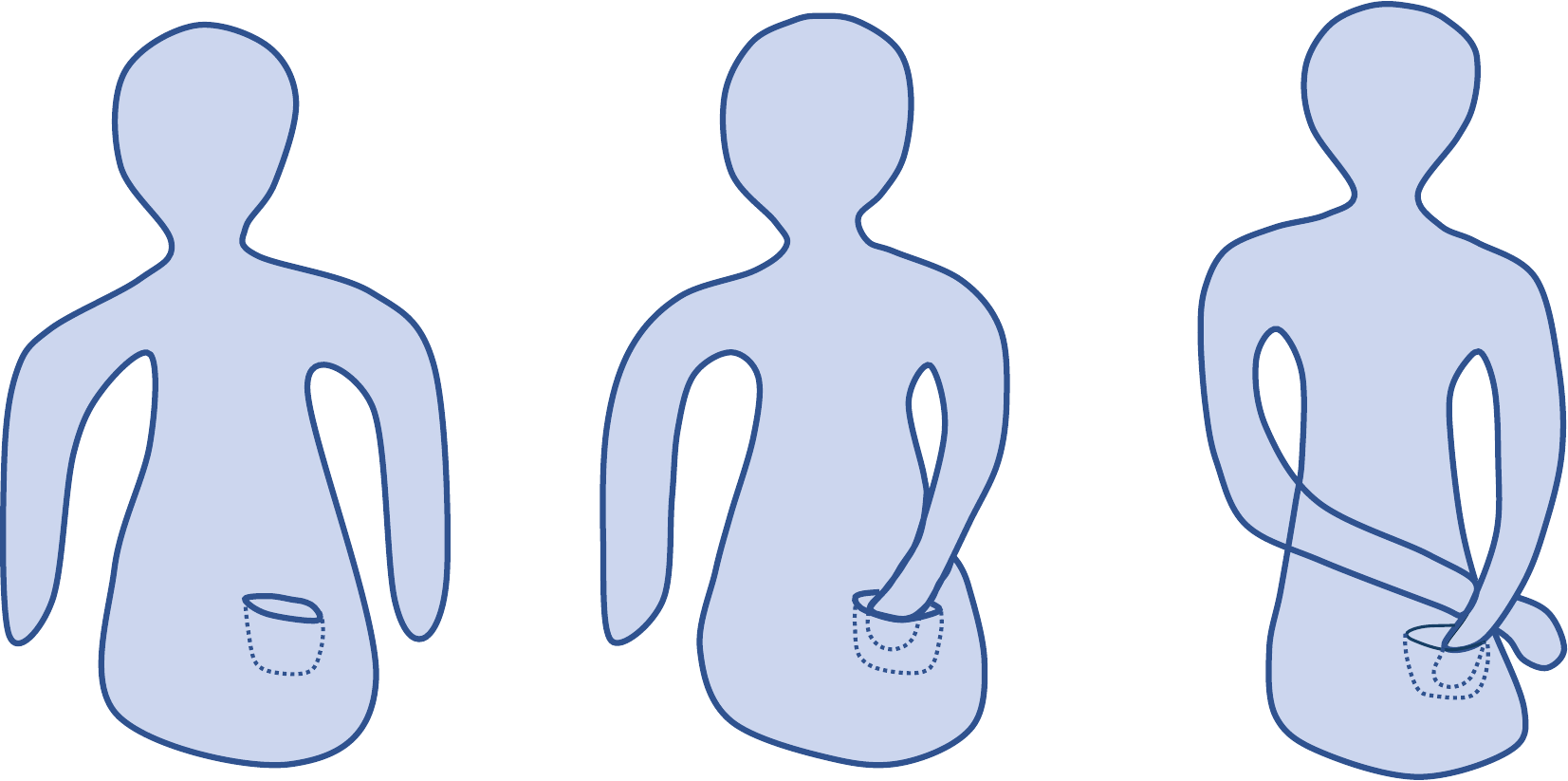}
  \caption{Three embedded spheres. Projecting onto the $z$-axis yields a height function with 1 maximum, 3 saddles, and 4 minima in each example. Even with aligned critical values, they cannot be deformed to each other with a $z$-level set preserving isotopy.}
  \label{fig:braided-spheres}
\end{figure}

\subsection{Connections: geometric group theory}\label{ssec:ggt}

Finally, another motivator for the present paper comes from geometric group theory. There has been a lot of recent interest in ``braiding'' groups that are intrinsically ``permutational'' in some way. The classical example is the braid groups themselves, which can be viewed as a braided version of the symmetric groups. Visually, this amounts to taking a picture for an element of $S_n$, viewed as $n$ arrows going from a line of $n$ dots to another line of $n$ dots, and introducing a specification of which arrows (now strands) go in front and which go behind as they cross. Algebraically, this can be realized by taking a group presentation in which each generator has a defining relation declaring it to have order 2, and simply removing these relations. Other, more recent examples of ``braided'' versions of groups include a variety of braided Thompson groups \cite{brin07,dehornoy06,brady08,funar08,funar11,bux16,witzel19,spahn}, braided Houghton groups \cite{degenhardt00,genevois22}, braided groups of automorphisms of an infinite, locally finite, regular tree \cite{aroca22,cumplido24,skipper23,skipper24}, and braided versions of self-similar groups and R\"over--Nekrashevych groups \cite{skipper24}. There is also a robust program originated by Jones, connecting Thompson's groups to braids, knots, and links, and unitary representation theory \cite{jones17}.

In this paper, we ``braid'' the group of automorphisms of an arbitrary (not necessarily regular) finite, rooted tree $T$. We construct a braided automorphism group $\BAut(T)$, and prove it is isomorphic to the fundamental group of the connected component of the space of unlabeled configurations of circles in the plane corresponding to $T$ in a certain way. We also prove these spaces are aspherical, and so the space corresponding to $T$ is a $K(\BAut(T),1)$. (For discrete groups like $\BAut(T)$ this implies the space is also a classifying space for the group, in the sense of homotopy theory.) Finally, we prove similar results in the ``pure braided'' case, dealing with labeled configurations. As a remark, the group $\BAut(T)$ has appeared before in a different guise in work of Kordek and Margalit \cite{kordek22}, as the group of mapping classes stabilizing a multicurve in a disk with some number of marked points. Here the tree $T$ comes from the nesting structure of the multicurve. It would be interesting to confirm that this is not a coincidence, for example using a generalization of the Birman exact sequence in Theorem~9.1 of \cite{farb12}.

Another possible direction for future study involves generalizing our results to locally finite infinite trees. With a good understanding of the groups $\BAut(T)$ for finite $T$, taking a limit should produce groups of the form $\BAut(T)$ for locally finite infinite $T$. This would generalize the situation from \cite{skipper24} to trees that are not necessarily regular, and would likely produce new groups of interest in the world of big mapping class groups, as in \cite{aramayona20}. Tying in to Subsection~\ref{ssec:stat_mech}, we speculate that it might also lead to interesting variations on representation stability, as in \cite{alpert2024configuration}. One could also consider configurations of circles on surfaces other than $\R^2$, such as the $2$-sphere, torus, surface of genus $g$ for $g>1$, and so forth.

\medskip

This paper is organized as follows. In Section~\ref{sec:config} we set up our spaces of interest. In Section~\ref{sec:circles_and_trees} we pin down a connection between connected components of these spaces and certain trees. In Section~\ref{sec:braiding_trees} we construct our braided versions of groups of automorphisms of these trees. Finally, in Section~\ref{sec:pi1} we prove that these braided automorphism groups are isomorphic to the fundamental groups of the connected components of our spaces.

\subsection*{Acknowledgments} 
Thanks are due to Yuliy Baryshnikov, Matt Kahle, Robbie Lyman, Dan Margalit, and Thomas Rot for helpful discussions. 
Special thanks goes out to Renee Hoekzema for several enlightening research conversations that occurred in Oxford and Amsterdam during the first author's sabbatical.
We are also grateful to the referee for several very helpful suggestions.
JC's research has been partially supported by NSF grant CCF-1850052 and NASA Contract 80GRC020C0016. 
MZ is supported by Simons grant \#635763.

%-------------------------------------------------
\section{Configuration spaces of points, circles, and disks}\label{sec:config}

In this section we define our spaces of interest, and establish initial results about them.

\begin{definition}[Configuration space of points]
Given a topological space $X$, the \emph{(labeled) configuration space of $n$ points in $X$} is the space $\Conf_n(*,X)$, whose elements are $n$-tuples $(x_1,\dots,x_n)$ of pairwise distinct elements $x_i\in X$, viewed as a subspace of $X^n$. There is a natural action of the symmetric group $S_n$ on $\Conf_n(*,X)$, given by permuting the entries in the tuples, and the orbit space $\UConf_n(*,X)$ is called the \emph{unlabeled configuration space of $n$ points in $X$}.
\end{definition}

We are particularly interested in the case when $X=\R^2$. Recall that the fundamental group of $\UConf_n(*,\R^2)$ is the $n$-strand braid group $B_n$ \cite{fox62}, given by the presentation
\[
B_n \coloneqq \langle s_1,\dots,s_{n-1} \mid s_i s_j = s_j s_i \text{ for all } |i-j|>1 \text{, } s_i s_{i+1} s_i = s_{i+1} s_i s_{i+1} \text{ for all } i \rangle \text{.}
\]
There is an obvious quotient map $\pi\colon B_n \to S_n$ onto the symmetric group, given by introducing the relations $s_i^2=1$ for all $i$. The kernel of $\pi$ is called the \emph{pure braid group}, denoted by $PB_n$.

Visually, an element of $B_n=\pi_1(\UConf_n(*,\R^2))$ can be pictured by working in $\R^2\times[0,1]$, and drawing a ``braid'' consisting of a configuration of $n$ points for each moment of ``time'' in $[0,1]$. See Figure~\ref{fig:braid} for an example.

\begin{figure}[htb]
\centering
\begin{tikzpicture}[line width=1pt]

\draw[line width=0.6pt] (1.5,-4) ellipse (2.5cm and 0.5cm);

\draw[white,line width=6pt] (3,0) to[out=-90, in=90] (2,-4);
\draw (3,0) to[out=-90, in=90] (2,-4);
\draw[white,line width=6pt] (1,0) to[out=-90, in=90] (3,-4);
\draw (1,0) to[out=-90, in=90] (3,-4);
\draw[white,line width=6pt] (2,0) to[out=-90, in=90] (0,-4);
\draw (2,0) to[out=-90, in=90] (0,-4);
\draw[white,line width=6pt] (0,0) to[out=-90, in=90] (1,-4);
\draw (0,0) to[out=-90, in=90] (1,-4);

\draw[white,line width=4pt] (1.5,0) ellipse (2.5cm and 0.5cm);
\draw[line width=0.6pt] (1.5,0) ellipse (2.5cm and 0.5cm);

\filldraw (0,0) circle (1.5pt);
\filldraw (1,0) circle (1.5pt);
\filldraw (2,0) circle (1.5pt);
\filldraw (3,0) circle (1.5pt);
\filldraw (0,-4) circle (1.5pt);
\filldraw (1,-4) circle (1.5pt);
\filldraw (2,-4) circle (1.5pt);
\filldraw (3,-4) circle (1.5pt);

\end{tikzpicture}
\caption{An element of $\pi_1(\UConf_4(*,\R^2))$, viewed as a braid.}
\label{fig:braid}
\end{figure}

We will also be concerned with configuration spaces of circles in the plane, and also of disks in the plane. By a \emph{circle} we will always mean a subspace of $\R^2$, with the usual Euclidean metric, that is a circle of some positive radius centered at some point, and a \emph{disk} will mean a closed disk with the same specifications. Configuration spaces of circles in $\R^3$ have been studied by Brendle--Hatcher in \cite{brendle13}, see also \cite{baez07,damiani17,goldsmith81}, and are related to the so called loop braid groups. Here we look at circles in $\R^2$, where the most evident difference with the $\R^3$ case is that, as we will see, the space is disconnected.

\begin{definition}[Configuration space of circles]\label{def:circle_config}
Let $\Conf_n(S^1,\R^2)$ be the topological space whose elements are ordered $n$-tuples $(C_1,\dots,C_n)$ of pairwise disjoint circles $C_i$ in the plane. The topology on $\Conf_n(S^1,\R^2)$ comes from realizing that each $C_i$ is specified by three data points, namely the coordinates of its center together with its radius, so we view $\Conf_n(S^1,\R^2)$ as a subspace of $\R^{3n}$. We call $\Conf_n(S^1,\R^2)$ the \emph{(labeled) configuration space of $n$ circles in the plane}, where we view the circles as labeled by the numbers $1$ through $n$. There is a natural action of the symmetric group $S_n$ on $\Conf_n(S^1,\R^2)$, given by permuting the entries in the tuples, and the orbit space $\UConf_n(S^1,\R^2)$ is called the \emph{unlabeled configuration space of $n$ circles in the plane}. We can view elements of $\UConf_n(S^1,\R^2)$ as (unordered) sets $\{C_1,\dots,C_n\}$ of $n$ pairwise disjoint circles in the plane. Call $\{C_1,\dots,C_n\}$ the \emph{underlying set} of $(C_1,\dots,C_n)$.
\end{definition}

\begin{definition}[Configuration space of disks]
Let $\Conf_n(D^2,\R^2)$ be the topological space whose elements are ordered $n$-tuples $(D_1,\dots,D_n)$ of pairwise disjoint disks $D_i$ in the plane. This can be viewed as a subspace of $\Conf_n(S^1,\R^2)$, via the embedding that ignores the interiors of all the disks (yielding the subspace in which none of the circles are nested in each other). This is the \emph{(labeled) configuration space of $n$ disks in the plane}, and we similarly define the \emph{unlabeled configuration space of $n$ disks in the plane} $\UConf_n(D^2,\R^2)$ to be the orbit space under the $S_n$-action.
\end{definition}

As mentioned, the topology on $\Conf_n(S^1,\R^2)$ and $\Conf_n(D^2,\R^2)$ comes from viewing them as subspaces of $\R^{3n}$. In fact, we can explicitly describe these subspaces, and see that they are semialgebraic sets, that is, they are defined as finite unions of sets of solutions to polynomial equalities and inequalities. Write the coordinates for elements of $\R^{3n}$ as $(r_1,\dots,r_n,x_1,\dots,x_n,y_1,\dots,y_n)$, with $r_i$ representing the radius of the $i$th circle and $(x_i,y_i)$ representing the center of the $i$th circle.

\begin{proposition}\label{prop:semialg}
The space $\Conf_n(S^1,\R^2)$ is described by the semialgebraic set in $\R^{3n}$ consisting of all $3n$-tuples as above satisfying that $r_i>0$ for all $1\le i\le n$, and for all $1\le i\ne j\le n$ we have:
\[
(r_i-r_j)^2-((x_i-x_j)^2+(y_i-y_j)^2)>0 \quad \text{ or } \quad (r_i+r_j)^2-((x_i-x_j)^2+(y_i-y_j)^2)<0 \text{.}
\]
Moreover, $\Conf_n(D^2,\R^2)$ is the subspace where we insist on the second possibility for all $1\le i\ne j\le n$.
\end{proposition}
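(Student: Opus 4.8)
The plan is to translate the geometric conditions ``the circles $C_i$ and $C_j$ are disjoint'' directly into the polynomial inequalities in the statement, by analyzing the three mutually exclusive geometric configurations two disjoint circles can be in. First I would fix a pair of indices $i \neq j$ and let $d = \sqrt{(x_i-x_j)^2 + (y_i-y_j)^2}$ denote the distance between the two centers. The key observation is that two circles of radii $r_i, r_j$ are disjoint precisely when one of two things happens: either one circle is strictly nested inside the other (which occurs exactly when $d < |r_i - r_j|$), or the two circles are strictly external to one another (which occurs exactly when $d > r_i + r_j$). The remaining cases, $|r_i - r_j| \le d \le r_i + r_j$, are exactly those in which the two circles intersect (internally tangent, transversally, or externally tangent), so they must be excluded.

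The main body of the argument is to verify these equivalences carefully. For the nesting case, I would argue that $C_i$ is strictly inside $C_j$ iff the disk bounded by $C_i$ is contained in the open disk bounded by $C_j$, which by the triangle inequality holds iff $d + r_i < r_j$, i.e. $d < r_j - r_i$; symmetrically $C_j$ strictly inside $C_i$ gives $d < r_i - r_j$, and together these say $d < |r_i - r_j|$. Squaring (legitimate since both sides are nonnegative) yields $d^2 < (r_i - r_j)^2$, which rearranges to the first displayed inequality $(r_i-r_j)^2 - ((x_i-x_j)^2 + (y_i-y_j)^2) > 0$. For the external case, $C_i$ and $C_j$ are un-nested and disjoint iff the closed disks are disjoint iff $d > r_i + r_j$; squaring gives $d^2 > (r_i+r_j)^2$, i.e. the second displayed inequality $(r_i+r_j)^2 - ((x_i-x_j)^2+(y_i-y_j)^2) < 0$. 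I would note these two conditions are mutually exclusive (one forces $d < |r_i-r_j| \le r_i + r_j$, the other forces $d > r_i + r_j$), consistent with the ``or'' being realized by at most one disjunct. The condition $r_i > 0$ simply records that we are dealing with genuine circles of positive radius.

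For the disk statement, the point is that $\LConf_n(D^2,\R^2)$ is embedded as the subspace in which no circle is nested inside another, so for every pair only the external (un-nested) configuration is permitted; this forces the second disjunct to hold for all $i \neq j$, which is exactly the asserted description. Finally, to conclude that each set is genuinely semialgebraic, I would observe that $\LConf_n(S^1,\R^2)$ is the intersection over all $i$ of $\{r_i > 0\}$ with the intersection over all pairs $i \neq j$ of the union of the two polynomial inequality sets; since semialgebraic sets are closed under finite unions and intersections, the whole set is semialgebraic. The disk space, being cut out purely by $r_i > 0$ together with one polynomial inequality per pair, is likewise semialgebraic.

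I expect the only genuinely delicate step to be the rigorous justification of the two equivalences, in particular pinning down that ``$C_i$ disjoint from $C_j$'' (where $C_i, C_j$ are the \emph{circles}, i.e. the one-dimensional curves) splits cleanly into the nested and external cases with the tangency/intersection boundary cases correctly excluded by the \emph{strict} inequalities. One must be careful that disjointness of the circles as curves is what corresponds to the open conditions $d < |r_i - r_j|$ or $d > r_i + r_j$, with equality in either bound giving tangency (a single point of intersection) and hence forbidden; everything else reduces to the triangle inequality and the monotonicity of squaring on the nonnegative reals, which are routine.
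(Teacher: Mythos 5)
Your proof is correct and takes essentially the same approach as the paper: both reduce disjointness of the $i$th and $j$th circles to the distance conditions $d<|r_i-r_j|$ or $d>r_i+r_j$ and then square to get the polynomial inequalities. The only cosmetic difference is that the paper argues the contrapositive (circles intersect iff $|r_i-r_j|\le d\le r_i+r_j$, cited as well known), whereas you derive the two disjoint cases (nested versus external) directly from the triangle inequality, which makes your write-up slightly more self-contained but substantively identical.
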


\begin{proof}
First note that $(r_1,\dots,r_n,x_1,\dots,x_n,y_1,\dots,y_n)$ represents an ordered $n$-tuple of circles if and only if $r_i>0$ for all $1\le i\le n$. We must show that for each $1\le i\ne j\le n$, the $i$th and $j$th circles are disjoint if and only if one of the two inequalities above holds. Equivalently, we will show that the $i$th and $j$th circles intersect if and only if we have
\[
(r_i-r_j)^2-((x_i-x_j)^2+(y_i-y_j)^2)\le 0 \quad \text{ and }\quad (r_i+r_j)^2-((x_i-x_j)^2+(y_i-y_j)^2)\ge 0 \text{.}
\]
Indeed, writing $\vec{c}_k=(x_k,y_k)$ for the center of $k$th circle, and $d$ for the distance function, this is equivalent to
\[
|r_i-r_j|\le d(\vec{c}_i,\vec{c}_j)\le r_i+r_j\text{,}
\]
which is a well known equivalent condition for two circles to intersect.

The disk case works analogously; we just need to show that the $i$th and $j$th disks intersect if and only if $d(\vec{c}_i,\vec{c}_j)\le r_i+r_j$, and this is clear.
\end{proof}

The main goal of this paper is to understand the fundamental groups of these labeled and unlabeled configuration spaces. The disk case turns out to be essentially the same as the point case, and so yields the classical braid and pure braid groups, but the circle case is much more subtle.

\begin{remark}
For any of these configuration spaces, we will use the terms ``connected'' and ``connected component'' with the implicit understanding that these are equivalent to ``path connected'' and ``path component''. The reason is that, as mentioned, the topology comes from embedding in some $\R^d$, and it is clear from Proposition~\ref{prop:semialg} that any of these spaces are open in their respective $\R^d$, hence are locally path connected. Thus, for example, we will refer to things like, ``the fundamental group of each connected component.''
\end{remark}

\subsection{Configurations of disks in the plane}\label{ssec:disks_in_plane}

In this subsection we prove that configurations of disks in the plane encode the same information as configurations of points in the plane. 
This result is not new, see \cite[Lemma 5.49]{idrissireal}, but we include it for completeness.
We emphasize that we consider disks of all positive radii, anywhere in the plane. 
As a remark, if one only considers disks of a fixed radius, and restricts them to live in an infinite strip, then more interesting topology can occur \cite{alpert2021configuration}.

Let
\[
\Psi\colon \Conf_n(D^2,\R^2)\to \Conf_n(*,\R^2)
\]
be the map that sends $(D_1,\dots,D_n)$ to $(\vec{c}(D_1),\dots,\vec{c}(D_n))$, where $\vec{c}(D_i)\in\R^2$ is the center of $D_i$. Note that $\Psi$ is $S_n$-equivariant, and so induces a map
\[
\overline{\Psi}\colon \UConf_n(D^2,\R^2)\to \UConf_n(*,\R^2) \text{.}
\]

\begin{lemma}[Lemma 5.49 of \cite{idrissireal}]\label{lem:disks_to_points}
The maps $\Psi$ and $\overline{\Psi}$ are homotopy equivalences.
\end{lemma}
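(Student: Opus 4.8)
The plan is to show that $\Psi$ is a homotopy equivalence by exhibiting an explicit deformation retraction, and then to deduce the statement for $\overline{\Psi}$ by passing to the quotient. The key observation is that a configuration of disks carries exactly the data of a configuration of centers \emph{plus} a positive radius for each disk, subject to the disjointness constraints. Intuitively, the centers already determine a point configuration, and the radii are ``extra'' data that should be contractible away. First I would construct a section $\Phi\colon \LConf_n(*,\R^2)\to \LConf_n(D^2,\R^2)$ of $\Psi$, sending a point configuration $(p_1,\dots,p_n)$ to the configuration of disks centered at the $p_i$ with radius equal to some fixed fraction (say one third) of the minimum pairwise distance $\min_{i\ne j} d(p_i,p_j)$. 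This radius depends continuously on the configuration and is always small enough to keep the disks pairwise disjoint, so $\Phi$ is well defined and continuous, and clearly $\Psi\circ\Phi=\id$.

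The heart of the argument is the homotopy $\Phi\circ\Psi\simeq\id$ on $\LConf_n(D^2,\R^2)$. Here I would use a straight-line homotopy in the radius coordinates while keeping the centers fixed: given disks $(D_1,\dots,D_n)$ with centers $\vec{c}_i$ and radii $r_i$, interpolate linearly from the radii $r_i$ to the target radii $\rho_i\coloneqq \tfrac13\min_{k\ne \ell}d(\vec{c}_k,\vec{c}_\ell)$, via $r_i(t)=(1-t)r_i+t\rho_i$. The centers never move, so this is a homotopy rel centers, and at $t=0$ it is the identity while at $t=1$ it lands in the image of $\Phi$. I would then verify this homotopy stays inside $\LConf_n(D^2,\R^2)$, i.e.\ that the disks remain pairwise disjoint throughout. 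For each pair $i\ne j$, disjointness of disks is the condition $r_i(t)+r_j(t)<d(\vec{c}_i,\vec{c}_j)$ by Proposition~\ref{prop:semialg}; since both $r_i+r_j<d(\vec{c}_i,\vec{c}_j)$ holds at $t=0$ (the original disks are disjoint) and $\rho_i+\rho_j\le \tfrac23 d(\vec{c}_i,\vec{c}_j)<d(\vec{c}_i,\vec{c}_j)$ holds at $t=1$, and the left side is an affine (hence, by convexity, sub-maximal) function of $t$, the strict inequality persists for all $t\in[0,1]$. This shows $\Phi\circ\Psi$ is homotopic to the identity, so $\Psi$ is a homotopy equivalence.

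For the unlabeled statement, I would check that every construction above is $S_n$-equivariant: both $\Phi$ and the homotopy are symmetric in the indices because the target radius $\rho_i$ is defined through the symmetric quantity $\min_{k\ne\ell}d(\vec{c}_k,\vec{c}_\ell)$, which is permutation-invariant, and the radius interpolation commutes with relabeling. Hence $\Phi$ descends to $\overline{\Phi}\colon \Conf_n(*,\R^2)\to \Conf_n(D^2,\R^2)$ and the homotopy descends to the orbit spaces, giving that $\overline{\Psi}$ is a homotopy equivalence as well. I expect the main obstacle to be the careful bookkeeping in the equivariance step and in confirming that the straight-line radius homotopy genuinely stays within the configuration space; the latter reduces to the convexity/affineness observation above, so once that is isolated the argument should be routine. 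A minor subtlety worth stating explicitly is that $\rho_i$ is the \emph{same} value for every $i$ (it does not depend on $i$), which streamlines both the disjointness estimate and the equivariance check.
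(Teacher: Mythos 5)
Your proposal is correct and follows essentially the same route as the paper: the same section $\Phi$ sending a point configuration to disks of radius one third of the minimum center distance, the identity $\Psi\circ\Phi=\id$, a homotopy $\Phi\circ\Psi\simeq\id$ that fixes centers and deforms radii to the common value $d/3$, and descent to the unlabeled quotient by $S_n$-equivariance. The only cosmetic difference is that you interpolate all radii linearly at once (justified by your convexity observation), whereas the paper performs the deformation in two stages, first shrinking the large disks and then growing the small ones; both verifications of disjointness are valid.
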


\begin{proof}
We first construct a map $\Phi$ in the other direction. 
Given $(\vec{x}_1,\dots,\vec{x}_n)$ in $\Conf_n(*,\R^2)$, 
let $d>0$ be the minimum distance in $\R^2$ between the points in the tuple. For each $\vec{x}\in\R^2$, let $B_{d/3}(\vec{x})$ be the (closed) ball of radius $d/3$ centered at $\vec{x}$, so this is a disk. Now consider 
$(B_{d/3}(\vec{x}_1),\dots,B_{d/3}(\vec{x}_n))$. 
By the choice of $d$, these disks are pairwise disjoint, so this is an element of $\Conf_n(D^2,\R^2)$. 
Let
\[
\Phi \colon \Conf_n(*,\R^2)\to \Conf_n(D^2,\R^2)
\]
be the map $(\vec{x}_1,\dots,\vec{x}_n)\mapsto (B_{d/3}(\vec{x}_1),\dots,B_{d/3}(\vec{x}_n))$.

By construction, $\Psi\circ\Phi$ is the identity on $\Conf_n(*,\R^2)$. In the other direction, $\Phi\circ\Psi$ sends a configuration of disks to the configuration of disks obtained by scaling each disk to have (its original center and) radius $d/3$, where $d$ is the minimum distance between centers of distinct disks. Thus the identity is (properly) homotopic to $\Phi\circ\Psi$, via a homotopy that first scales each disk with radius larger than $d/3$ down to have radius $d/3$, and then scales each disk with radius smaller than $d/3$ up to have radius $d/3$.

All of the above works equally well in the unlabeled case, and we get that $\overline{\Psi}$ is a homotopy equivalence.
\end{proof}

The following is now immediate.

\begin{corollary}
The spaces $\UConf_n(D^2,\R^2)$ and $\Conf_n(D^2,\R^2)$ are connected, and have fundamental groups $B_n$ and $PB_n$ respectively. \qed
\end{corollary}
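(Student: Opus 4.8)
The plan is to deduce the corollary directly from Lemma~\ref{lem:disks_to_points} together with the classical facts about configuration spaces of points recalled at the start of this section. The key point is that a homotopy equivalence induces a bijection on path components and an isomorphism on fundamental groups, so all of the asserted data simply transports across $\Psi$ and $\overline{\Psi}$ from the point side to the disk side.

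First I would record the invariants on the point side. As noted above, $\Conf_n(*,\R^2)$ is connected with $\pi_1\cong B_n$ by Artin's theorem. The labeled space $\LConf_n(*,\R^2)$ is connected as well, since it is the complement in $(\R^2)^n\cong\R^{2n}$ of the ``fat diagonal'' $\bigcup_{i\ne j}\{\vec x_i=\vec x_j\}$, a finite union of codimension-$2$ subspaces, and deleting a closed subset of codimension at least $2$ from a connected manifold leaves it connected. Its fundamental group is the pure braid group $PB_n$; indeed $\LConf_n(*,\R^2)\to\Conf_n(*,\R^2)$ is the connected $S_n$-cover classified by $\pi\colon B_n\to S_n$, so $\pi_1(\LConf_n(*,\R^2))\cong\ker\pi=PB_n$.

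Then I would simply invoke Lemma~\ref{lem:disks_to_points}. Since $\Psi$ is a homotopy equivalence, $\LConf_n(D^2,\R^2)$ is connected and $\pi_1(\LConf_n(D^2,\R^2))\cong\pi_1(\LConf_n(*,\R^2))=PB_n$; since $\overline{\Psi}$ is a homotopy equivalence, $\Conf_n(D^2,\R^2)$ is connected and $\pi_1(\Conf_n(D^2,\R^2))\cong\pi_1(\Conf_n(*,\R^2))=B_n$. There is essentially no obstacle to overcome here: the only thing worth flagging is that the identifications of $\pi_1$ on the point side are imported classical results rather than facts established in this paper, which is precisely why the statement is immediate and is presented with only a \qed.
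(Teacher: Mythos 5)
Your proposal is correct and matches the paper's intended argument exactly: the corollary is stated as immediate precisely because the homotopy equivalences $\Psi$ and $\overline{\Psi}$ of Lemma~\ref{lem:disks_to_points} transport connectivity and fundamental groups from the classical point configuration spaces, whose invariants ($B_n$ for the unlabeled space, $PB_n = \ker\pi$ for the labeled cover) are imported background facts. Your extra details (the fat-diagonal connectivity argument and the covering-space identification of $PB_n$) are sound elaborations of what the paper leaves implicit.
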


See Figure~\ref{fig:braid_disks} for an example of a loop in $\UConf_4(D^2,\R^2)$, represented by a ``braid of disks''. A feature of this picture is that it emphasizes how the disks in a configuration need not have the same radii.

\begin{figure}[htb]
\centering
\begin{tikzpicture}[line width=1pt]

%big disk at bottom
\draw[line width=0.6pt] (2.5,-4) ellipse (4cm and 0.5cm);

%disks at bottom
\filldraw[lightgray] (0,-4) ellipse (8pt and 4 pt);
\draw (0,-4) ellipse (8pt and 4 pt);
\filldraw[lightgray] (1,-4) ellipse (8pt and 4 pt);
\draw (1,-4) ellipse (8pt and 4 pt);
\filldraw[lightgray] (3,-4) ellipse (30pt and 4 pt);
\draw (3,-4) ellipse (30pt and 4 pt);
\filldraw[lightgray] (5,-4) ellipse (8pt and 4 pt);
\draw (5,-4) ellipse (8pt and 4 pt);

%disk 4->3
\filldraw[lightgray] (4.73,0) to[out=-90, in=90] (1.95,-4) -- (4.05,-4) to[out=90,in=-90] (5.27,0) -- (4.73,0);
\draw (4.73,0) to[out=-90, in=90] (1.95,-4)  (4.05,-4) to[out=90,in=-90] (5.27,0) -- (4.73,0);

%disk 2->4
\filldraw[lightgray] (0.73,0) to[out=-90, in=90] (4.73,-4) -- (5.27,-4) to[out=90,in=-90] (1.27,0) -- (0.73,0);
\draw (0.73,0) to[out=-90, in=90] (4.73,-4)  (5.27,-4) to[out=90,in=-90] (1.27,0) -- (0.73,0);

%disk 3->1
\filldraw[lightgray] (1.95,0) to[out=-90, in=90] (-0.27,-4) -- (0.27,-4) to[out=90,in=-90] (4.05,0);
\draw (1.95,0) to[out=-90, in=90] (-0.27,-4)  (0.27,-4) to[out=90,in=-90] (4.05,0);

%disk 1-> 2
\draw[lightgray,line width=0.5cm] (0,0) to[out=-90, in=90] (1,-4);
\draw (-0.27,0) to[out=-90, in=90] (0.73,-4);
\draw (0.27,0) to[out=-90, in=90] (1.27,-4);

%big disk at top
\draw[line width=0.6pt] (2.5,0) ellipse (4cm and 0.5cm);

%disks at top
\filldraw[lightgray] (0,0) ellipse (8pt and 4 pt);
\draw (0,0) ellipse (8pt and 4 pt);
\filldraw[lightgray] (1,0) ellipse (8pt and 4 pt);
\draw (1,0) ellipse (8pt and 4 pt);
\filldraw[lightgray] (3,0) ellipse (30pt and 4 pt);
\draw (3,0) ellipse (30pt and 4 pt);
\filldraw[lightgray] (5,0) ellipse (8pt and 4 pt);
\draw (5,0) ellipse (8pt and 4 pt);

\end{tikzpicture}
\caption{A loop in $\UConf_4(D^2,\R^2)$, represented by a ``braid of disks''.}
\label{fig:braid_disks}
\end{figure}

%-------------------------------------------------
\section{Between circle configurations and trees}\label{sec:circles_and_trees}

In contrast to $\UConf_n(D^2,\R^2)$, the configuration space $\UConf_n(S^1,\R^2)$ is disconnected for $n\ge 2$, as is $\Conf_n(S^1,\R^2)$. 
Intuitively, a configuration in which two circles are nested cannot be continuously deformed to one in which no circles are nested; recall Figure \ref{fig:nested-v-unnested} for two points in $\Conf_2(S^1,\R^2)$ that are in different components.
For more circles, Figure~\ref{fig:braid_circles} shows how a loop in $\UConf_4(S^1,\R^2)$ can be viewed as a braid, but where braids are partitioned off by larger enclosing circles. 
In this example, the largest circle represents the plane $\R^2$ and is not part of the configuration, two of the circles in the configuration are separately nested in another one, and the last circle in the configuration is un-nested with the others. 
The two circles nested in the other one may switch places, but no other permutations of circles are allowed. 
Circles of different radii are free to switch places, as long as the nesting pattern permits this. 
(To be clear, in the labeled case, no circles may switch places.)

\begin{figure}[htb]
\centering
\begin{tikzpicture}[line width=1pt]

%isolated disk moving halfway down
\draw (3.73,0) to[out=-90, in=90] (-2,-3.5);
\draw (4.27,0) to[out=-90, in=90] (-1.3,-3.5);

%first inside disk moving
\draw[white,line width=7pt] (-0.27,0) to[out=-90, in=90] (0.77,-7);
\draw (-0.27,0) to[out=-90, in=90] (0.77,-7);
\draw[white,line width=7pt] (0.27,0) to[out=-90, in=90] (2.2,-7);
\draw (0.27,0) to[out=-90, in=90] (2.2,-7);

%second inside disk moving
\draw[white,line width=3pt] (0.77,0) to[out=-90, in=90] (-0.27,-7);
\draw (0.77,0) to[out=-90, in=90] (-0.27,-7);
\draw[white,line width=3pt] (2.2,0) to[out=-90, in=90] (0.27,-7);
\draw (2.2,0) to[out=-90, in=90] (0.27,-7);

%outside disk moving
\draw[white,line width=5pt] (-0.95,0) -- (-0.95,-7)   (2.95,0) -- (2.95,-7);
\draw (-0.95,0) -- (-0.95,-7)   (2.95,0) -- (2.95,-7);

%isolated disk moving rest of way
\draw[white,line width=3pt] (-2,-3.5) to[out=-90, in=90] (3.73,-7);
\draw (-2,-3.5) to[out=-90, in=90] (3.73,-7);
\draw[white,line width=3pt] (-1.3,-3.5) to[out=-90, in=90] (4.27,-7);
\draw (-1.3,-3.5) to[out=-90, in=90] (4.27,-7);

%disks at top
\draw (0,0) ellipse (8pt and 4pt);
\draw (1.5,0) ellipse (20pt and 4pt);
\draw (1,0) ellipse (56pt and 8pt);
\draw (4,0) ellipse (8pt and 4pt);

%disks at bottom
\draw (0,-7) ellipse (8pt and 4pt);
\draw (1.5,-7) ellipse (20pt and 4pt);
\draw (1,-7) ellipse (56pt and 8pt);
\draw (4,-7) ellipse (8pt and 4pt);

%top and bottom big disks
\draw[line width=0.6pt] (2,0) ellipse (4cm and 0.5cm);
\draw[line width=0.6pt] (2,-7) ellipse (4cm and 0.5cm);

\end{tikzpicture}
\caption{A loop in $\UConf_4(S^1,\R^2)$, represented in a braid-like form.} 
\label{fig:braid_circles}
\end{figure}

It turns out the connected components of these configuration spaces are indexed in a certain sense by trees with one root and $n$ other vertices. 
Before spelling this out, let us pin down what we mean by a tree.

\begin{definition}[Tree, labeled tree]\label{def:tree}
By a \emph{tree} we will always mean a finite rooted planar tree. 
In more detail: it is a finite directed graph that is connected and has no cycles (even undirected cycles). 
One vertex $\varnothing$ is designated as the \emph{root}. 
The root has only outgoing edges, and every other vertex has one incoming edge and some number of outgoing edges. 
The vertices with zero outgoing edges are called \emph{leaves}. 
Given a vertex $v$, the \emph{children} of $v$ are the termini of the edges with origin $v$. The \emph{descendants} of a vertex are its children, plus their children, etc.
The tree is planar in the sense that we view it in the plane, with the root at the bottom and the edges directed upward.
Planarity ensures that for each vertex $v$ there is an ordering of the children of $v$ from left to right.
A \emph{labeled tree} is a tree with $n$ non-root vertices together with an identification of the non-root vertices with the set $\{1,\dots,n\}$. 
If we forget the labels on the vertices of a labeled tree, then we recover its \emph{underlying tree}. 
We emphasize that the labels on the vertices need not have any relationship with the planar order on the children of a vertex.
\end{definition}

There is some potential ambiguity about what it means for two trees or labeled trees to be equal versus isomorphic. Let us pin down exactly what we mean. When we refer to an \emph{isomorphism of trees}, we mean this in the sense of abstract directed graphs, so an isomorphism between two trees is a bijection between their vertex sets that preserves adjacency and direction of edges, but need not preserve the planar order of any vertices. Note that there is only one self-isomorphism of a given tree that does preserve the planar order of each set of children of each vertex, namely the identity. 
Thus, any isomorphism between two trees that preserves the planar order is necessarily unique, and so may be regarded as a notion of equality; in other words two (planar) trees are equal if they are isotopic in the plane.
For example, the trees in Figure~\ref{fig:iso_nonequal_unlabeled_config_tree} are isomorphic but not equal. (In this and all future figures, we will not draw arrows for directed edges in trees, but they are all directed upward. We will also draw the root larger than the other vertices.)

\begin{figure}[htb]
\centering
\begin{tikzpicture}[line width=1pt]
\begin{scope}[xshift=7cm,yshift=-0.0cm]
\draw (0,0) -- (0.5,-0.5) -- (1.5,0.5) -- (1.5,1)   (1,0) -- (0,1)   (0.5,0.5) -- (1,1)   (0.5,1.5) -- (1,1) -- (1.5,1.5)   (1,1) -- (1,1.5);

\filldraw (0,0) circle (1.5pt);
\filldraw (0.5,-0.5) circle (2.5pt);
\filldraw (1.5,0.5) circle (1.5pt);
\filldraw (1,0) circle (1.5pt);
\filldraw (0,1) circle (1.5pt);
\filldraw (0.5,0.5) circle (1.5pt);
\filldraw (1,1) circle (1.5pt);
\filldraw (1.5,1) circle (1.5pt);
\filldraw (0.5,1.5) circle (1.5pt);
\filldraw (1.5,1.5) circle (1.5pt);
\filldraw (1,1.5) circle (1.5pt);
\end{scope}

\end{tikzpicture}
\hspace{2cm}
\begin{tikzpicture}[line width=1pt]
\begin{scope}[xshift=7cm,yshift=-0.0cm]
\draw (0,0) -- (-0.5,-0.5) -- (-2,1)   (-1,1) -- (-1.5,0.5)   (-1,0) -- (-0.5,0.5) -- (-0.5,1)   (-2.5,1.5) -- (-2,1) -- (-1.5,1.5)   (-2,1) -- (-2,1.5);

\filldraw (0,0) circle (1.5pt);
\filldraw (-0.5,-0.5) circle (2.5pt);
\filldraw (-2,1) circle (1.5pt);
\filldraw (-1,0) circle (1.5pt);
\filldraw (-0.5,1) circle (1.5pt);
\filldraw (-0.5,0.5) circle (1.5pt);
\filldraw (-1,1) circle (1.5pt);
\filldraw (-1.5,0.5) circle (1.5pt);
\filldraw (-2.5,1.5) circle (1.5pt);
\filldraw (-1.5,1.5) circle (1.5pt);
\filldraw (-2,1.5) circle (1.5pt);
\end{scope}

\end{tikzpicture}
\caption{An example of two trees that are isomorphic but not equal.}
\label{fig:iso_nonequal_unlabeled_config_tree}
\end{figure}
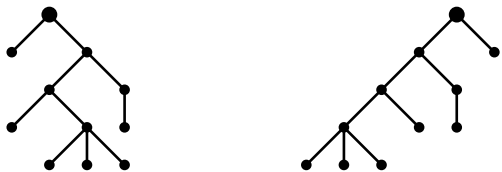

To summarize, a tree can be uniquely specified by listing the children of the root from left to right $v_1,\dots,v_m$, then listing the children of $v_1$ from left to right, then listing the children of $v_2$ from left to right, and so on and so forth.

As for labeled trees, an \emph{isomorphism of labeled trees} is simply an isomorphism of trees that preserves labels. Similar to the unlabeled case, we will view two labeled trees as equal if their underlying trees are equal and the labelings are the same. It is relatively straightforward to tell whether two labeled trees with $n$ non-root vertices are isomorphic as labeled trees: this happens if and only if for all $1\le i\le n$, the set of labels of children of the vertex labeled $i$ are the same in each labeled tree (with an analogous statement for the root). Of course two labeled trees may have isomorphic underlying trees without being isomorphic. See Figure~\ref{fig:iso_nonequal_labeled_config_tree} for an example summarizing the notions of isomorphism and equality for labeled and unlabeled trees.

\begin{figure}[htb]
\centering
\begin{tikzpicture}[line width=1pt]
\draw (0,0) -- (0.5,-0.5) -- (1.5,0.5)   (1,0) -- (0,1)   (0.5,0.5) -- (1,1);

\filldraw (0,0) circle (1.5pt);
\filldraw (0.5,-0.5) circle (2.5pt);
\filldraw (1.5,0.5) circle (1.5pt);
\filldraw (1,0) circle (1.5pt);
\filldraw (0,1) circle (1.5pt);
\filldraw (0.5,0.5) circle (1.5pt);
\filldraw (1,1) circle (1.5pt);

\node at (-0.3,-0.1) {$1$};
\node at (1.3,-0.1) {$2$};
\node at (0.2,0.4) {$3$};
\node at (1.7,0.5) {$4$};
\node at (-0.3,1) {$5$};
\node at (1.3,1) {$6$};

\node at (0.5,2.0) {$T_1$};

\begin{scope}[xshift=6cm]
\draw (0,0) -- (-0.5,-0.5) -- (-2,1)   (-1,1) -- (-1.5,0.5)   (-1,0) -- (-0.5,0.5);

\filldraw (0,0) circle (1.5pt);
\filldraw (-0.5,-0.5) circle (2.5pt);
\filldraw (-2,1) circle (1.5pt);
\filldraw (-1,0) circle (1.5pt);
\filldraw (-1,1) circle (1.5pt);
\filldraw (-0.5,0.5) circle (1.5pt);
\filldraw (-1.5,0.5) circle (1.5pt);

\node at (0.3,-0.1) {$1$};
\node at (-1.3,-0.1) {$2$};
\node at (-1.8,0.4) {$3$};
\node at (-0.3,0.5) {$4$};
\node at (-0.8,1) {$5$};
\node at (-2.3,1) {$6$};

\node at (-0.5,2.0) {$T_2$};
\end{scope}

\begin{scope}[xshift=11cm]
\draw (0,0) -- (-0.5,-0.5) -- (-2,1)   (-1,1) -- (-1.5,0.5)   (-1,0) -- (-0.5,0.5);

\filldraw (0,0) circle (1.5pt);
\filldraw (-0.5,-0.5) circle (2.5pt);
\filldraw (-2,1) circle (1.5pt);
\filldraw (-1,0) circle (1.5pt);
\filldraw (-1,1) circle (1.5pt);
\filldraw (-0.5,0.5) circle (1.5pt);
\filldraw (-1.5,0.5) circle (1.5pt);

\node at (0.3,-0.1) {$5$};
\node at (-1.3,-0.1) {$3$};
\node at (-1.8,0.4) {$4$};
\node at (-0.3,0.5) {$6$};
\node at (-0.8,1) {$1$};
\node at (-2.3,1) {$2$};

\node at (-0.5,2.0) {$T_3$};
\end{scope}

\end{tikzpicture}
\caption{Three labeled trees, $T_1$, $T_2$, and $T_3$. As labeled trees, $T_1$ and $T_2$ are isomorphic but not equal, and $T_3$ is not isomorphic to either of them. The underlying tree of $T_3$ is equal to that of $T_2$, and is isomorphic but not equal to that of $T_1$.}
\label{fig:iso_nonequal_labeled_config_tree}
\end{figure}
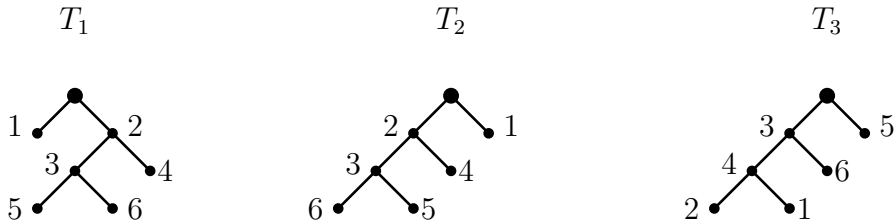

\subsection{The labeled case}\label{ssec:labeled_circles}

In this subsection we set up a connection between labeled configurations of circles and labeled trees. See \cite{scheinerman1988circle,catanzaro2020moduli} for other viewpoints of this same idea. Say that one circle in the plane is \emph{nested} in another if the former lies in the bounded component of the complement of the latter. To be clear, we do not consider a circle to be nested in itself. Given a (labeled) configuration $\kappa=(C_1,\dots,C_n)$ of circles in the plane, if $C_j$ is nested in $C_i$ and moreover no $C_k$ simultaneously is nested in $C_i$ and has $C_j$ nested in it, then we will write $C_i\vdash C_j$. Also write $\R^2\vdash C_i$ whenever $C_i$ is not nested in any $C_j$. For $T$ a labeled tree with the non-root vertices labeled $1,\dots,n$, write $i\prec j$ whenever $j$ is a child of $i$, and write $\varnothing\prec i$ whenever $i$ is a child of the root $\varnothing$. As a remark, taking the transitive closure of $\prec$ yields a poset structure on $\{\varnothing,1,\dots,n\}$, with $T$ as its Hasse diagram.

\begin{construction}[The labeled tree of a configuration]
Let $\kappa=(C_1,\dots,C_n)$ be a (labeled) configuration of circles in the plane. Define a labeled tree $T_\kappa$, called the \emph{labeled tree of $\kappa$}, as follows. The set of vertices of $T_\kappa$ consists of a root $\varnothing$, and non-root vertices labeled $1,\dots,n$. The edges are defined by the rule that $i\prec j$ if and only if $C_i \vdash C_j$, and $\varnothing \prec i$ if and only if $\R^2\vdash C_i$. Note that if $i\prec j$ and $i'\prec j$ then $i=i'$, thanks to how $\vdash$ is defined, so $T_\kappa$ really is a tree. For a given vertex, the left to right order on the set of children of that vertex is determined by the lexicographic order on the centers of the corresponding circles. That is, if $i\prec j$ and $i\prec j'$ for $j\ne j'$, with $(x_j,y_j)$ and $(x_{j'},y_{j'})$ the centers of $C_j$ and $C_{j'}$ respectively, then $j$ is to the left of $j'$ if and only if either $x_j<x_{j'}$, or $x_j=x_{j'}$ and $y_j<y_{j'}$. (Note that $C_j$ and $C_{j'}$ cannot have the same center, since $i\prec j$ and $i\prec j'$.)
\end{construction}

This gives us a canonical way of turning a labeled configuration of circles in the plane into a labeled tree. See Figure~\ref{fig:config_tree} for an example.

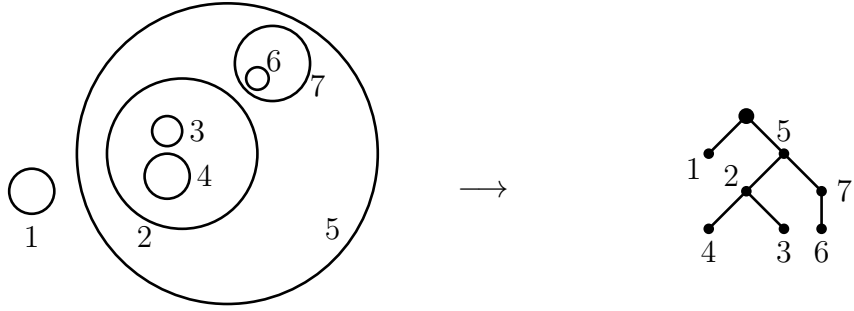
\begin{figure}[htb]
\centering
\begin{tikzpicture}[line width=1pt]

\draw (-2,-0.5) circle (0.3cm);
\draw (0,0) circle (1cm);
\draw (-0.2,0.3) circle (0.2cm);
\draw (-0.2,-0.3) circle (0.3cm);
\draw (0.6,0) circle (2cm);
\draw (1,1) circle (0.15cm);
\draw (1.2,1.2) circle (0.5cm);

\node at (-2,-1.1) {$1$};
\node at (-0.5,-1.1) {$2$};
\node at (0.2,0.3) {$3$};
\node at (0.3,-0.3) {$4$};
\node at (2,-1) {$5$};
\node at (1.22,1.23) {$6$};
\node at (1.8,0.9) {$7$};

\node at (4,-0.5) {$\longrightarrow$};

\begin{scope}[xshift=7cm,yshift=-0.5cm]
\draw (0,0) -- (0.5,-0.5) -- (1.5,0.5) -- (1.5,1)   (1,0) -- (0,1)   (0.5,0.5) -- (1,1);

\filldraw (0,0) circle (1.5pt);
\filldraw (0.5,-0.5) circle (2.5pt);
\filldraw (1.5,0.5) circle (1.5pt);
\filldraw (1,0) circle (1.5pt);
\filldraw (0,1) circle (1.5pt);
\filldraw (0.5,0.5) circle (1.5pt);
\filldraw (1,1) circle (1.5pt);
\filldraw (1.5,1) circle (1.5pt);

\node at (-0.2,0.2) {$1$};
\node at (0.3,0.3) {$2$};
\node at (1,1.3) {$3$};
\node at (0,1.3) {$4$};
\node at (1,-0.3) {$5$};
\node at (1.5,1.3) {$6$};
\node at (1.8,0.5) {$7$};
\end{scope}

\end{tikzpicture}
\caption{A labeled configuration $\kappa$ of circles in $\Conf_7(S^1,\R^2)$, and the associated labeled tree $T_\kappa$. Note that the left to right order on the children of a given vertex comes from the lexicographic order on the centers of the relevant circles.}
\label{fig:config_tree}
\end{figure}

In the other direction, we can turn labeled trees into configurations, although things are less canonical.

\begin{construction}[The fixed configuration for a labeled tree]\label{con:varkappa}
For each labeled tree $T$ with non-root vertices labeled $1,\dots,n$, we will fix a particular (labeled) configuration
\[
\kappafixed_T=(\Cfixed_1^T,\dots,\Cfixed_n^T)
\]
of circles in the plane that satisfies the rule that $\Cfixed_i^T \vdash \Cfixed_j^T$ if and only if $i\prec j$. We construct $\kappafixed_T$ as follows. First, write $i_1,\dots,i_k$ for the labels on the children of the root, from left to right, and define the circle $\Cfixed_{i_j}^T$ to have center $(\frac{j-1}{k},0)$ and radius $\frac{1}{3k}$. Note that these are pairwise disjoint, none are nested in each other, and they are all contained in the open disk of radius $1$. Now for any non-root vertex that is not a leaf, say with label $p$, write $i_1,\dots,i_k$ for its children from left to right. Assume we have already constructed $\Cfixed_p^T$, and that it is centered at $(x_p,0)$ with radius $r_p$. Now for each $1\le j\le k$, construct $\Cfixed_{i_j}^T$ to have center $(x_p + \frac{(j-1)r_p}{k},0)$ and radius $\frac{r_p}{3k}$. Note that these are pairwise disjoint, none are nested in each other, and they are all contained in $\Cfixed_p^T$. Iterating this, we construct the entire tuple $\kappafixed_T=(\Cfixed_1^T,\dots,\Cfixed_n^T)$, and get that it really is a (labeled) configuration of circles.
\end{construction}

The root should be thought of as representing an unwritten $0$th entry that is the unit circle. See Figure~\ref{fig:tree_config} for an example.

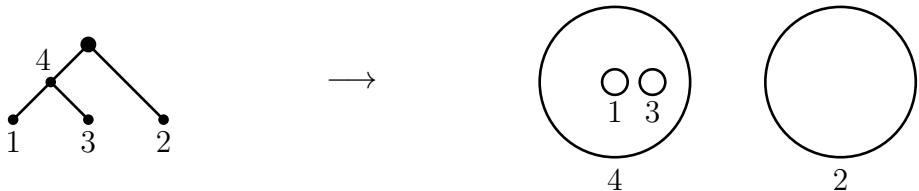
\begin{figure}[htb]
\centering
\begin{tikzpicture}[line width=1pt]

\draw (1,1) -- (0,0) -- (-1,1)   (-0.5,0.5) -- (0,1);
\filldraw (1,1) circle (1.5pt);
\filldraw (0,0) circle (2.5pt);
\filldraw (-1,1) circle (1.5pt);
\filldraw (-0.5,0.5) circle (1.5pt);
\filldraw (0,1) circle (1.5pt);

\node at (-1,1.3) {$1$};
\node at (0,1.3) {$3$};
\node at (-0.6,0.2) {$4$};
\node at (1,1.3) {$2$};

\node at (3.5,0.5) {$\longrightarrow$};

\begin{scope}[xshift=7cm,yshift=0.5cm]
\draw (0,0) circle (1cm);
\draw (0,0) circle (0.17cm);
\draw (0.5,0) circle (0.17cm);
\draw (3,0) circle (1cm);

\node at (0,-0.4) {$1$};
\node at (0.5,-0.4) {$3$};
\node at (0,-1.3) {$4$};
\node at (3,-1.3) {$2$};
\end{scope}

\end{tikzpicture}
\caption{The labeled circle configuration $\kappafixed_T$ corresponding to the given labeled tree $T$, as in Construction~\ref{con:varkappa}. The circles labeled $4$ and $2$ are centered at $(0,0)$ and $(\frac{1}{2},0)$, and have radius $1/6$. The circles labeled $1$ and $3$ are centered at $(0,0)$ and $(\frac{1}{12},0)$, and have radius $1/36$.}
\label{fig:tree_config}
\end{figure}

Note that if the labeled trees $T$ and $U$ have the same underlying trees, then the underlying sets $\{\Cfixed_1^T,\dots,\Cfixed_n^T\}$ and $\{\Cfixed_1^U, \dots,\Cfixed_n^U\}$ are equal. Also note that the labeled tree of $\kappafixed_T$ is $T$.

To be clear, we tend to use symbols like $\kappa$ and $C$ for arbitrary configurations and circles, and here we use $\kappafixed$ and $\Cfixed$ for these specific, fixed choices of configurations and circles.

\begin{remark}
We should remark on the seemingly pathological case $n=0$. If $T$ is the trivial tree, that is, the tree with one vertex, then $\kappafixed_T$ is the empty tuple, which is the only element of $\Conf_0(S^1,\R^2)=\{\emptyset\}$. In the other direction, the tree of the empty configuration consists only of a root, simply by definition. Thus, the $n=0$ case does fit into this setup, albeit in a somewhat non-intuitive way.
\end{remark}

\subsection{The unlabeled case}\label{ssec:unlabeled_circles}

The previous subsection was all about labeled configurations and labeled trees. In this subsection we establish the analogous connections between unlabeled configurations and unlabeled trees. The labeled case was easier to deal with first, and the unlabeled case follows more or less directly from it.

Recall from Definition~\ref{def:circle_config} that $\UConf_n(S^1,\R^2)$ is the orbit space of $\Conf_n(S^1,\R^2)$ under the action of $S_n$. Write
\[
p\colon \Conf_n(S^1,\R^2) \to \UConf_n(S^1,\R^2)
\]
for the covering map.

\begin{definition}[The tree of a configuration]
Let $\kappa=\{C_1,\dots,C_n\}$ be an unlabeled configuration of $n$ circles. Let $\widetilde{\kappa}$ be any choice of labeled configuration whose underlying set is $\kappa$, e.g., we could use $\widetilde{\kappa}=(C_1,\dots,C_n)$, and consider the corresponding labeled tree $T_{\widetilde{\kappa}}$. Now the \emph{tree of $\kappa$}, denoted $T_\kappa$, is defined to be the underlying tree of $T_{\widetilde{\kappa}}$. Since different labelings of $\kappa$ produce labeled trees with the same underlying tree, this is well defined.
\end{definition}

\begin{construction}[The fixed configuration for a tree]
Let $T$ be an (unlabeled) tree. Choose some labeling of $T$, and write $\widetilde{T}$ for the labeled tree. We have our fixed (labeled) configuration
\[
\kappafixed_{\widetilde{T}}=(\Cfixed_1^{\widetilde{T}},\dots,\Cfixed_n^{\widetilde{T}})
\]
from Construction~\ref{con:varkappa}. Let $\Cfixed_i^T\coloneqq \Cfixed_i^{\widetilde{T}}$ for each $i$, and set
\[
\kappafixed_T\coloneqq \{\Cfixed_1^T,\dots,\Cfixed_n^T\}\text{.}
\]
Note that the order of the $\Cfixed_i^T$ depends on the choice of labeling, but as an (unordered) set, $\kappafixed_T$ is well defined and equals the underlying set of $\kappafixed_{\widetilde{T}}$. Also note that $p(\kappafixed_{\widetilde{T}})=\kappafixed_T$, and in fact the fiber of $\kappafixed_T$ under $p$ equals the set of all $n!$ many points of the form $\kappafixed_{\widetilde{T}}$ for each choice of labeled tree $\widetilde{T}$ with underlying tree $T$.
\end{construction}

\subsection{Connected components}

We now use the correspondence between configurations and trees to describe the connected components of our configuration spaces of circles.

\begin{proposition}[Description of connected components (labeled case)]\label{prop:components}
Let $T$ and $U$ be labeled trees with $n$ non-root vertices. Then $\kappafixed_T$ and $\kappafixed_U$ lie in the same connected component of $\Conf_n(S^1,\R^2)$ if and only if $T\cong U$. Moreover, every connected component of $\Conf_n(S^1,\R^2)$ contains some $\kappafixed_T$ for $T$ a labeled tree, and $T\cong T_\kappa$ for each $\kappa$ in this connected component.
\end{proposition}

\begin{proof}
Suppose $\kappafixed_T$ and $\kappafixed_U$ lie in the same connected component, and we want to show that $T\cong U$. Recall that two labeled trees with the same number of vertices are isomorphic if and only if for all $1\le i\le n$, the set of labels of children of the vertex labeled $i$ are the same in $T$ and $U$, with an analogous statement for the root. Choose a path from $\kappafixed_T$ to $\kappafixed_U$, call it $\gamma(t)$ ($0\le t\le 1$). We can write
\[
\gamma(t)=(C_1(t),\dots,C_n(t))\text{,}
\]
so $C_i(0)=\Cfixed_i^T$ and $C_i(1)=\Cfixed_i^U$ for each $1\le i\le n$. Note that $\gamma(t)$ is a configuration at every time $t$, so if the $j$th circle is nested in the $i$th circle at some time $t$, this must hold for all times $t$. This shows that for any $i$ and $j$, we have that $\Cfixed_i^T \vdash \Cfixed_j^T$ if and only if $\Cfixed_i^U \vdash \Cfixed_j^U$. Hence, $i\prec j$ in $T$ if and only if $i\prec j$ in $U$ (with an analogous statement about the root), and so indeed $T\cong U$.

Now suppose $T\cong U$, and we need to prove there is a path in $\Conf_n(S^1,\R^2)$ from $\kappafixed_T$ to $\kappafixed_U$. We will induct on $n$. If $n=0$ there is nothing to prove, so assume $n\ge 1$. In particular the root is not a leaf. Fix a label $i$ such that the vertex labeled $i$ in $T$, and hence in $U$, is a leaf. Let $\widehat{T}$ be the labeled tree obtained by removing the leaf labeled $i$ from $T$, and relabeling the remaining non-root vertices by replacing $j$ with $j-1$ for all $j>i$. Similarly define $\widehat{U}$. Note that $\widehat{T}\cong \widehat{U}$, so by induction there is a path in $\Conf_{n-1}(S^1,\R^2)$ from $\kappafixed_{\widehat{T}}$ to $\kappafixed_{\widehat{U}}$. It is also clear that there is a path in $\Conf_{n-1}(S^1,\R^2)$ from $\kappafixed_{\widehat{T}}$ to the labeled configuration obtained from $\kappafixed_T$ by deleting the $i$th circle, and similarly from $\kappafixed_{\widehat{U}}$ to $\kappafixed_U$ with the $i$th circle deleted. Thus we have a path from $\kappafixed_T$ without the $i$th circle to $\kappafixed_U$ without the $i$th circle. Let us write
\[
\widehat{\gamma}(t) = (C_1(t),\dots,C_{i-1}(t),C_{i+1}(t),\dots,C_n(t))
\]
for this path, with $0\le t\le 1$, so $C_j(0)=\Cfixed_j^T$ and $C_j(1)=\Cfixed_j^U$ for all $j\ne i$. Note that for each $t$, the path at time $t$ describes a configuration, i.e., for each $t$ the $C_j(t)$ are pairwise disjoint for all $j$. Let $k$ be the label on the vertex of $T$ that has the vertex labeled $i$ as a child, and let $j_1,\dots,j_p$ be the labels on the other children of this vertex. For each $t$ let $S(t)$ be the connected open region of the plane obtained by taking the open disk bounded by $C_k(t)$ and removing the closed disks bounded by each $C_{j_\ell}(t)$ ($1\le \ell\le p$). The path $\widehat{\gamma}(t)$ induces, for each $t$, a homeomorphism $h_t$ from $S(0)$ to $S(t)$, and these homeomorphisms vary continuously with $t$. Now for our fixed $i$, let $C_i'(t)$ ($0\le t\le 1$) be some choice of a family of continuously varying circles such that $C_i'(0)=\Cfixed_i^T$ and for each $t$, $C_i'(t)$ is contained in the closed disk bounded by $h_t(\Cfixed_i^T)$. Write
\[
\gamma'(t)=(C_1(t),\dots,C_{i-1}(t),C_i'(t),C_{i+1}(t),\dots,C_n(t)) \text{,}
\]
so our choice of $C_i'(t)$ ensures that each $\gamma'(t)$ is a configuration, and hence this is a path in $\Conf_n(S^1,\R^2)$ from $\gamma'(0)=\kappafixed_T$ to the configuration
\[
\kappa'\coloneqq (\Cfixed_1^U,\dots,\Cfixed_{i-1}^U,C_i'(1),\Cfixed_{i+1}^U,\dots,\Cfixed_n^U)\text{.}
\]
Finally, let $\gamma(t)$ be a path obtained by concatenating $\gamma'(t)$ with some path from $\kappa'$ to $\kappafixed_U$ that leaves all but the $i$th circle fixed, and takes the $i$th circle from $C_i'(1)$ to $\Cfixed_i^U$. This is possible since $C_i'(1)$ and $\Cfixed_i^U$ both lie in the connected region $S(1)$. All in all, we end up with a path $\gamma(t)$ in $\Conf_n(S^1,\R^2)$ from $\gamma(0)=\kappafixed_T$ to $\gamma(1)=\kappafixed_U$.

Finally, we must prove the last claim. Let $\kappa$ be an arbitrary labeled configuration in $\Conf_n(S^1,\R^2)$, and it suffices to prove that there is a path from $\kappa$ to $\kappafixed_T$, where $T\coloneqq T_\kappa$. In fact, this works in the same way as the previous part. By induction there is a path between the configurations with the $i$th circles deleted, and then we can extend this to a path between the full configurations since the $i$th circle in $\kappa$ and the $i$th circle in $\kappafixed_T$ have the same nesting properties with respect to the other circles in their respective configurations.
\end{proof}

The unlabeled case is now a consequence of the labeled case. Essentially the same statement is true, now with isomorphism considered for trees, instead of labeled trees.

\begin{proposition}[Description of connected components (unlabeled case)]\label{prop:unlabeled_components}
Let $T$ and $U$ be trees with $n$ non-root vertices. Then $\kappafixed_T$ and $\kappafixed_U$ lie in the same connected component of $\UConf_n(S^1,\R^2)$ if and only if $T\cong U$. Moreover, every connected component of $\UConf_n(S^1,\R^2)$ contains some $\kappafixed_T$ for $T$ a tree, and $T\cong T_\kappa$ for each $\kappa$ in this connected component.
\end{proposition}

\begin{proof}
First suppose $\kappafixed_T$ and $\kappafixed_U$ lie in the same connected component, and we need to show that $T\cong U$. Choose some labeling of $T$, and write $\widetilde{T}$ for the resulting labeled tree, so $p(\kappafixed_{\widetilde{T}})=\kappafixed_T$. We can lift a path in $\UConf_n(S^1,\R^2)$ from $\kappafixed_T$ to $\kappafixed_U$, to a path in $\Conf_n(S^1,\R^2)$ starting at $\kappafixed_{\widetilde{T}}$. The end of this path is some labeled configuration that maps under $p$ to $\kappafixed_U$, and so equals $\kappafixed_{\widetilde{U}}$ for some labeled tree $\widetilde{U}$ with underlying tree $U$. By Proposition~\ref{prop:components}, the fact that $\kappafixed_{\widetilde{T}}$ and $\kappafixed_{\widetilde{U}}$ share a connected component implies that $\widetilde{T}\cong \widetilde{U}$, so in particular $T\cong U$.

Now suppose $T\cong U$. Choose a labeling of $T$, and write $\widetilde{T}$ for the resulting labeled tree. Carry the labels on the vertices of $T$ through a choice of isomorphism $T\to U$, to obtain labels on the vertices of $U$, so the resulting labeled tree $\widetilde{U}$ is isomorphic to $\widetilde{T}$ as a labeled tree. By Proposition~\ref{prop:components}, $\kappafixed_{\widetilde{T}}$ and $\kappafixed_{\widetilde{U}}$ share a connected component in $\Conf_n(S^1,\R^2)$. Now the image of a path from $\kappafixed_{\widetilde{T}}$ to $\kappafixed_{\widetilde{U}}$ under the map $p$ is a path in $\UConf_n(S^1,\R^2)$ from $\kappafixed_T$ to $\kappafixed_U$.

Finally, the last claim is immediate from the analogous statement for the labeled case in Proposition~\ref{prop:components}, just using the covering map $p$.
\end{proof}

With these results in hand, we can now define the following:

\begin{definition}[Space of $T$-configurations]\label{def:Tconfigs}
For $T$ a labeled tree with $n$ non-root vertices, write $\Conf_T(S^1,\R^2)$ for the connected component of $\Conf_n(S^1,\R^2)$ containing $\kappafixed_T$, and call $\Conf_T(S^1,\R^2)$ the \emph{space of (labeled) $T$-configurations} of $n$ circles in the plane. Similarly, for $T$ an (unlabeled) tree with $n$ non-root vertices, write $\UConf_T(S^1,\R^2)$ for the connected component of $\UConf_n(S^1,\R^2)$ containing $\kappafixed_T$, and call $\UConf_T(S^1,\R^2)$ the \emph{space of unlabeled $T$-configurations} of $n$ circles in the plane.
\end{definition}

In this way, from Propositions~\ref{prop:components} and~\ref{prop:unlabeled_components}, we see that the connected components of these spaces are precisely indexed by isomorphism classes of labeled, respectively unlabeled, trees. See Figure~\ref{fig:two_circles} for the example of configuration spaces of two circles, corresponding to trees with two non-root vertices.

\begin{figure}[htb]
\centering
\begin{tikzpicture}[line width=1pt]
\draw (1,1) -- (0,0) -- (-1,1);
\filldraw (1,1) circle (1.5pt);
\filldraw (0,0) circle (2.5pt);
\filldraw (-1,1) circle (1.5pt);

\begin{scope}[xshift=4cm]
\draw (0,0) -- (0,2);
\filldraw (0,0) circle (2.5pt);
\filldraw (0,1) circle (1.5pt);
\filldraw (0,2) circle (1.5pt);
\end{scope}

\end{tikzpicture}
\caption{Up to isomorphism there are exactly two (unlabeled) trees with $2$ non-root vertices, pictured here. These correspond to unlabeled configurations of $2$ circles in the plane; the first corresponds to the case where the circles are not nested, and the second to the case where the circles are nested.}
\label{fig:two_circles}
\end{figure}
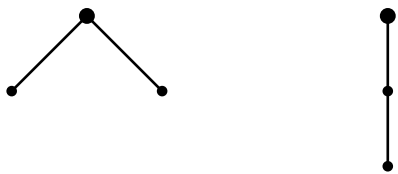

%-------------------------------------------------
\section{Braiding automorphism groups of finite trees}\label{sec:braiding_trees}

The goal of this section is to construct ``braided'' versions of automorphism groups of finite trees, which will then turn out to be the right model for fundamental groups of spaces of configurations of circles in the plane. Let $T$ be an (unlabeled) tree, in the sense of Definition~\ref{def:tree}. Given a vertex $v$ of $T$, let $T(v)$ be the induced subgraph whose vertices consist of $v$ together with all of its descendants. Call $T(v)$ the \emph{subtree rooted at $v$}. Say that two vertices $v,w$ of $T$ have the same \emph{type} if $T(v)$ and $T(w)$ are isomorphic. Clearly having the same type is an equivalence relation.

An \emph{automorphism} of $T$ is an isomorphism from $T$ to itself. The set of automorphisms of $T$ forms a group $\Aut(T)$ under composition. Note that every automorphism of $T$ fixes the root, since the root is the unique vertex with no incoming edges.

\begin{lemma}\label{lem:orbit}
Let $v$ be a vertex of $T$ and let $u,w$ be children of $v$. Then $u$ and $w$ have the same type if and only if they share an $\Aut(T)$-orbit.
\end{lemma}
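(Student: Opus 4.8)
The plan is to prove the two directions separately, with the ``same orbit implies same type'' direction being immediate and the converse requiring an explicit ``subtree swap'' construction.

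For the forward direction, I would take $\varphi \in \Aut(T)$ with $\varphi(u) = w$ and observe that any automorphism carries the set of descendants of a vertex bijectively onto the set of descendants of its image, since it preserves edges and their directions. Hence $\varphi$ restricts to a graph isomorphism $T(u) \to T(w)$, witnessing $T(u) \cong T(w)$, so $u$ and $w$ have the same type.

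For the converse, the key structural point is that $u$ and $w$ are siblings: both are children of $v$, so neither is a descendant of the other, and therefore the vertex sets of $T(u)$ and $T(w)$ are disjoint. Assuming $u$ and $w$ have the same type, I would fix an isomorphism $\psi\colon T(u) \to T(w)$; note that $\psi$ necessarily sends the root $u$ of $T(u)$ to the root $w$ of $T(w)$, since within each subtree the root is the unique vertex with no incoming edge. I would then define $\varphi\colon T \to T$ by letting it agree with $\psi$ on $T(u)$, with $\psi^{-1}$ on $T(w)$, and with the identity on every remaining vertex. Disjointness of $T(u)$ and $T(w)$ is exactly what makes this well-defined on vertices.

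It remains to verify that $\varphi$ is a graph automorphism, which is where I would spend the care, though I do not anticipate a genuine obstacle. Edges internal to $T(u)$, internal to $T(w)$, or internal to the complement are preserved by $\psi$, $\psi^{-1}$, and the identity respectively. The only edges crossing between these pieces are $v \to u$ and $v \to w$; since $v$ lies outside both subtrees it is fixed, while $\varphi(u) = w$ and $\varphi(w) = u$, so these two edges are simply swapped, and adjacency and direction are preserved throughout. Thus $\varphi \in \Aut(T)$ with $\varphi(u) = w$, so $u$ and $w$ share an $\Aut(T)$-orbit. The only subtlety worth flagging is that $\Aut(T)$ consists of abstract (not necessarily order-preserving) tree isomorphisms, so $\psi$ need not respect the left-to-right ordering on children---which is precisely what allows the swap to be realized by an honest automorphism.
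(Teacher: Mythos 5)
Your proof is correct and follows essentially the same approach as the paper: the paper dismisses the forward direction as obvious and, for the converse, extends an isomorphism $\alpha\colon T(u)\to T(w)$ to an automorphism of $T$ by using $\alpha$ on $T(u)$, $\alpha^{-1}$ on $T(w)$, and the identity elsewhere. Your additional verification of the edge cases (the crossing edges $v\to u$ and $v\to w$) and the remark about order-preservation are sound elaborations of the same argument.
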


\begin{proof}
The reverse implication is obvious. Now suppose $u$ and $w$ have the same type, so there exists an isomorphism $\alpha\colon T(u)\to T(w)$. We can extend this to an automorphism $T\to T$ by sending $T(u)$ to $T(w)$ via $\alpha$, sending $T(w)$ to $T(u)$ via $\alpha^{-1}$, and fixing all vertices outside $T(u)\cup T(w)$.
\end{proof}

For any partition $\Pi$ of $\{1,\dots,m\}$, let $S_m^\Pi\le S_m$ be the subgroup of the symmetric group consisting of all permutations that setwise stabilize each block of $\Pi$. If $\Pi=\{A_1,\dots,A_k\}$, then we have $S_m^\Pi \cong S_{|A_1|}\times\cdots\times S_{|A_k|}$. If $\Pi$ is the trivial partition $\{\{1,\dots,m\}\}$ then there is no constraint, so $S_m^{\{\{1,\dots,m\}\}}=S_m$, and we will just write $S_m$.

Returning to our tree $T$, suppose the root of $T$ has $m$ children, $v_1,\dots,v_m$. Let $\Pi$ be the partition of $\{1,\dots,m\}$ defined by the rule that $i$ and $j$ share a block of $\Pi$ if and only if $v_i$ and $v_j$ have the same type.

\begin{proposition}\label{prop:decomp_Aut(T)}
With $\{v_1,\dots,v_m\}$ and $\Pi$ as above, the automorphism group $\Aut(T)$ decomposes as a semidirect product
\[
\Aut(T) \cong (\Aut(T(v_1))\times\cdots\times\Aut(T(v_m)))\rtimes S_m^\Pi \text{,}
\]
where the action of $S_m^\Pi$ on the direct product is by permuting entries.
\end{proposition}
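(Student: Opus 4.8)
The plan is to exhibit the decomposition by identifying which automorphisms land in each factor and then checking the semidirect product relations directly. First I would observe that since every automorphism of $T$ fixes the root, any $\varphi\in\Aut(T)$ restricts to a permutation of the set of children $\{v_1,\dots,v_m\}$ of the root; write $\sigma\in\Sym(\{v_1,\dots,v_m\})\cong S_m$ for this induced permutation. The key constraint is that an automorphism must send each subtree $T(v_i)$ isomorphically onto $T(v_{\sigma(i)})$, so $T(v_i)\cong T(v_{\sigma(i)})$, which by definition means $v_i$ and $v_{\sigma(i)}$ have the same type. Invoking Lemma~\ref{lem:orbit} (or just the definition of $\Pi$), this says precisely that $\sigma$ setwise stabilizes each block of $\Pi$, i.e. $\sigma\in S_m^\Pi$. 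This gives a well-defined group homomorphism
\[
q\colon \Aut(T)\to S_m^\Pi\text{,}\qquad \varphi\mapsto\sigma\text{.}
\]

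Next I would identify the kernel and a splitting. The kernel of $q$ consists of those automorphisms that fix each $v_i$, and hence restrict to an automorphism of each subtree $T(v_i)$ independently; conversely any choice of automorphisms of the $T(v_i)$ assembles (since $T$ is the root with the subtrees $T(v_i)$ glued beneath it) into an automorphism fixing each $v_i$. This identifies $\ker q\cong\Aut(T(v_1))\times\cdots\times\Aut(T(v_m))$. To split $q$, I would construct a section $s\colon S_m^\Pi\to\Aut(T)$: given $\sigma\in S_m^\Pi$, for each block of $\Pi$ use the fact that the subtrees indexed within that block are pairwise isomorphic to build, as in the proof of Lemma~\ref{lem:orbit}, an automorphism that permutes those subtrees according to $\sigma$ via chosen isomorphisms. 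The one subtlety here is that to get an honest homomorphism (not just a set-theoretic section) I should fix, once and for all, a representative subtree in each type-class and fix isomorphisms from every subtree of that type to the representative; then $s(\sigma)$ can be defined uniformly so that $s(\sigma\tau)=s(\sigma)s(\tau)$.

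Finally, I would verify the semidirect product conclusion. With $\ker q$ normal, $s$ a homomorphic section, and $q\circ s=\id$, standard group theory gives $\Aut(T)\cong\ker q\rtimes S_m^\Pi$, and the conjugation action of $s(\sigma)$ on the normal factor permutes the direct-product coordinates $\Aut(T(v_i))$ according to $\sigma$ (carrying the $i$th factor to the $\sigma(i)$th via the chosen isomorphisms), which is exactly the ``permuting entries'' action in the statement.

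\medskip

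I expect the main obstacle to be the construction of the section $s$ as a genuine homomorphism rather than merely a set map: without fixing canonical representatives and canonical isomorphisms between same-type subtrees, the naive ``swap subtrees'' maps only compose correctly up to the inner automorphisms of the subtrees, so $s$ would fail to be multiplicative. Pinning down these choices coherently across each block of $\Pi$ is the crux; everything else (normality of the kernel, the form of the action) follows formally once the section is in place.
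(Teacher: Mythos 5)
Your proposal is correct and takes essentially the same route as the paper: the paper also decomposes $\Aut(T)$ internally, taking $N$ to be the pointwise fixer of the root's children (your $\ker q$) and a complement isomorphic to $S_m^\Pi$ built from automorphisms permuting same-type subtrees (the image of your section $s$). The coherence issue you flag---that the subtree isomorphisms must be chosen compatibly (e.g.\ through fixed representatives) for $s$ to be a genuine homomorphism---is a real subtlety, and your treatment of it is in fact more careful than the paper's, which simply asserts that the swap automorphisms of Lemma~\ref{lem:orbit} generate a copy of $S_m^\Pi$.
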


Here the proposition is phrased using the ``external'' viewpoint of semidirect products, i.e., $G=N\rtimes H$ means that $N$ and $H$ are groups, and $H$ acts on $N$ by automorphisms via some implicitly understood (left) action, so the group operation on $N\rtimes H$ is given by $(n,h)(n',h')=(n(h.n'),hh')$. In the proof we will actually decompose $\Aut(T)$ using the ``internal'' viewpoint, i.e., $G=N\rtimes H$ means that $N$ and $H$ are subgroups of $G$, $N$ is normal, $N\cap H=\{1\}$, and $G=NH$.

\begin{proof}[Proof of Proposition~\ref{prop:decomp_Aut(T)}]
If $m=0$, i.e., $T$ is the trivial tree, then this is simply saying that the trivial group is isomorphic to the trivial group. Now assume $m\ge 1$.

First note that if $v_i$ and $v_j$ have the same type, then $T(v_i)$ and $T(v_j)$ are isomorphic as trees, and hence $\Aut(T(v_i))$ and $\Aut(T(v_j))$ are isomorphic as groups. Thus, it makes sense to say that the action of $S_m^\Pi$ on the direct product is by permuting entries.

Now to prove the decomposition, we will use the internal semidirect product viewpoint. By Lemma~\ref{lem:orbit} and its proof, $i$ and $j$ share a block of $\Pi$ if and only if there is an automorphism of $T$ that swaps $T(v)$ and $T(w)$ and fixes everything outside $T(v)\cup T(w)$. These automorphisms generate a copy of $S_m^\Pi$ inside $\Aut(T)$, since within each block $A\in\Pi$, they generate the relevant copy of $S_{|A|}$. Write $H$ for this subgroup of $\Aut(T)$ isomorphic to $S_m^\Pi$. Write $N$ for the pointwise stabilizer in $\Aut(T)$ of $\{v_1,\dots,v_m\}$, so $N$ is isomorphic to $\Aut(T(v_1))\times\cdots\times\Aut(T(v_m))$. Now observe that every element of $\Aut(T)$ can be written as an automorphism in $H$ composed with an automorphism in $N$, so $\Aut(T)=NH$. Moreover, since $S_m^\Pi$ acts faithfully on $\{v_1,\dots,v_m\}$, we have $N\cap H=\{1\}$. Finally, $N$ is normal, being the kernel of the action on $\{v_1,\dots,v_m\}$, so we conclude that $\Aut(T)=N\rtimes H$. The conjugation action of $H$ on $N$ coincides with the action of $S_m^\Pi$ on $\Aut(T(v_1))\times\cdots\times\Aut(T(v_m))$ by permuting entries, so we are done.
\end{proof}

We now have a recursive description of $\Aut(T)$ as iterated semidirect products of symmetric groups.

\begin{example}\label{ex:big_tree}
Let $T$ be the tree in Figure~\ref{fig:big_tree}. The group of automorphisms of $T$ decomposes according to iterated applications of Proposition~\ref{prop:decomp_Aut(T)} as follows:
\[
\Aut(T) \cong \bigg(\big((\{1\}\times\{1\}\times\{1\})\rtimes S_3^\Pi\big) \times \big((\{1\}\times\{1\}\times\{1\})\rtimes S_3^\Pi\big)\bigg)\rtimes S_2 \text{,}
\]
where $\Pi$ is the partition of $\{1,2,3\}$ given by $\Pi=\{\{1,2\},\{3\}\}$. The ``outside'' factor of $S_2$ is not constrained by a partition, since the subtrees rooted at the two children of the root are isomorphic. The ``middle'' factors $S_3^\Pi$ correspond to the fact that, among the three children of either child of the root, the subtrees rooted at the first two are isomorphic, but are not isomorphic to the third. The ``inside'' factors are all trivial since the relevant subtrees only have one leaf and so have trivial automorphism groups. Simplifying the decomposition, we get
\[
\Aut(T) \cong (S_2 \times S_2)\rtimes S_2 \text{,}
\]
with the semidirect product given by swapping coordinates. More familiarly, one can check that this happens to be isomorphic to the dihedral group of order 8.
\end{example}

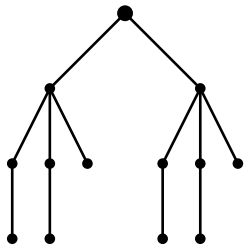
\begin{figure}[htb]
\centering
\begin{tikzpicture}[line width=1pt]

\draw (-1,1) -- (0,0) -- (1,1);
\draw (-1.5,2) -- (-1,1) -- (-0.5,2)   (-1,1) -- (-1,2);
\draw (1.5,2) -- (1,1) -- (0.5,2)   (1,1) -- (1,2);
\draw (-1.5,2) -- (-1.5,3)   (-1,2) -- (-1,3)   (0.5,2) -- (0.5,3)   (1,2) -- (1,3);

\filldraw (0,0) circle (2.5pt);
\filldraw (-1,1) circle (1.5pt);
\filldraw (1,1) circle (1.5pt);
\filldraw (-1.5,2) circle (1.5pt);
\filldraw (-1,2) circle (1.5pt);
\filldraw (-0.5,2) circle (1.5pt);
\filldraw (0.5,2) circle (1.5pt);
\filldraw (1,2) circle (1.5pt);
\filldraw (1.5,2) circle (1.5pt);
\filldraw (-1.5,3) circle (1.5pt);
\filldraw (-1,3) circle (1.5pt);
\filldraw (0.5,3) circle (1.5pt);
\filldraw (1,3) circle (1.5pt);

\end{tikzpicture}
\caption{The tree for Example~\ref{ex:big_tree}.}
\label{fig:big_tree}
\end{figure}

Our next goal is to ``braid'' $\Aut(T)$, by replacing each symmetric group in the above iterated semidirect product structure with an appropriate braid group. Let $\pi\colon B_m \to S_m$ be the standard epimorphism, with kernel the pure braid group $PB_m$. We will use the notation $\pi$ for all $m$, and no ambiguity should arise. Given a partition $\Pi$ of $\{1,\dots,m\}$, let
\[
B_m^\Pi\coloneqq \pi^{-1}(S_m^\Pi) \text{,}
\]
so $B_m^\Pi$ consists of all braids such that if a given strand begins somewhere in a block $A$ of $\Pi$, it must also end somewhere in $A$. Note that $PB_m\le B_m^\Pi\le B_m$ for all $\Pi$.

\begin{definition}[Braided $\Aut(T)$]\label{def:BAut}
Let $T$ be a tree, as in Definition~\ref{def:tree}. We define the \emph{braided automorphism group} $\BAut(T)$ of $T$ recursively as follows. Let $\{v_1,\dots,v_m\}$ be the set of children of the root of $T$. Let $\Pi$ be the partition of $\{1,\dots,m\}$ whereby $i$ and $j$ share a block if and only if $v_i$ and $v_j$ have the same type. Now define
\[
\BAut(T) \coloneqq (\BAut(T(v_1))\times\cdots\times \BAut(T(v_m))) \rtimes B_m^\Pi \text{,}
\]
where the action of $B_m^\Pi$ on the direct product is given by mapping to $S_m^\Pi$ via $\pi$ and then permuting the entries.
\end{definition}

This recursive definition makes sense since each $T(v_i)$ has strictly fewer vertices than $T$. We get that $\BAut(T)$ is built out of iterated semidirect products of subgroups of braid groups. To be clear, in the bottom case when $T$ is the trivial tree, so $m=0$, we have that $B_0$ is the trivial group (being generated by the empty set), so for $T$ the trivial tree we have that $\BAut(T)$ is the trivial group. Note that the group $\BAut(T)$ is isomorphic to the group in Lemma~3.1 of \cite{kordek22}, which lives in the mapping class group of a disk with a puncture for each leaf of $T$, as the stabilizer of a multicurve whose nesting structure is given by $T$ (so the ``innermost'' circles are viewed as punctures). A special case of this also appears in Section~3 of \cite{chen}. It would be interesting to pin down this connection explicitly, using a variant of the Birman exact sequence in Theorem~9.1 of \cite{farb12}.

The maps $\pi\colon B_m\to S_m$ for all $m$ piece together into a map
\[
\pi \colon \BAut(T) \to \Aut(T)\text{,}
\]
where we continue to abuse notation and write $\pi$ for every ``braids to permutations'' map.

\begin{definition}[Pure braided $\Aut(T)$]\label{def:PBAut}
The \emph{pure braided automorphism group} $\PBAut(T)$ is the kernel of the map $\pi\colon \BAut(T)\to \Aut(T)$. Since $PB_m$ acts trivially when permuting entries, in fact
\[
\PBAut(T) = (\PBAut(T(v_1))\times\cdots\times \PBAut(T(v_m))) \times PB_m \text{,}
\]
and so an inductive argument shows that
\[
\PBAut(T) \cong PB_{d_1}\times\cdots\times PB_{d_n}\times PB_m \text{,}
\]
where $d_i$ is the number of children of the vertex labeled $i$, under some arbitrarily chosen labeling of $T$.
\end{definition}

In particular $\PBAut(T)$ is quite a bit less interesting than $\BAut(T)$, being just a direct product of pure braid groups.

\begin{example}\label{ex:big_tree_braided}
Let us braid the example from Example~\ref{ex:big_tree} and Figure~\ref{fig:big_tree}. The ``inside'' factors are trivial. Each ``middle'' factor is $B_3^\Pi$, for $\Pi=\{\{1,2\},\{3\}\}$. The ``outside'' factor is $B_2$. We therefore get
\[
\BAut(T) \cong (B_3^\Pi \times B_3^\Pi)\rtimes B_2 \text{,}
\]
where we have not written the trivial factors, and where the action in the semidirect product is given by mapping $B_2\to S_2$ and swapping factors. The group $B_3^\Pi$ is the index-$3$ subgroup of $B_3$ consisting of braids in which the first two strands may switch places, but not the third.

As for the pure case, it is clear from Definition~\ref{def:PBAut} that $\PBAut(T)\cong PB_3\times PB_3\times PB_2$ for this $T$, since two vertices have $3$ children, one vertex has $2$ children, and all other vertices have $1$ or $0$ children.
\end{example}

One last example involves when $T$ is minimal in some sense, and foreshadows the general case of our main result about fundamental groups in Theorem~\ref{thrm:main}.

\begin{lemma}\label{lem:small_tree}
If every non-root vertex of $T$ is a child of the root, and there are $n$ non-root vertices, then $\pi_1(\UConf_T(S^1,\R^2))\cong \BAut(T)$ by virtue of both being isomorphic to $B_n$.
\end{lemma}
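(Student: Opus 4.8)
The plan is to verify the two claimed isomorphisms separately and then chain them: I will show $\BAut(T)\cong B_n$ purely algebraically from Definition~\ref{def:BAut}, show $\pi_1(\Conf_T(S^1,\R^2))\cong B_n$ by identifying the relevant component with a disk configuration space, and conclude that both sides agree.

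First I would unwind Definition~\ref{def:BAut} for this $T$. The root has exactly $n$ children $v_1,\dots,v_n$, and by hypothesis each is a leaf, so every subtree $T(v_i)$ is a single vertex and hence $\BAut(T(v_i))$ is trivial. Since all the $v_i$ have the same type, the partition $\Pi$ of $\{1,\dots,n\}$ is the trivial partition and $B_n^\Pi=B_n$. The recursion then collapses to
\[
\BAut(T)=(\{1\}\times\cdots\times\{1\})\rtimes B_n\cong B_n,
\]
which handles the algebraic half.

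For the topological half I would identify $\Conf_T(S^1,\R^2)$ with $\Conf_n(D^2,\R^2)$. The flat tree $T$ is exactly the tree of any configuration in which no circle is nested in another, so by Proposition~\ref{prop:unlabeled_components} the component $\Conf_T(S^1,\R^2)$ is precisely the locus of non-nested configurations. By Proposition~\ref{prop:semialg}, two disjoint circles are non-nested exactly when $d(\vec{c}_i,\vec{c}_j)>r_i+r_j$, which is the condition for the bounded disks they bound to be disjoint. Thus the embedding $\LConf_n(D^2,\R^2)\hookrightarrow\LConf_n(S^1,\R^2)$ (sending a disk to its boundary circle) is a homeomorphism onto $\LConf_T(S^1,\R^2)$, and after passing to $S_n$-quotients it identifies $\Conf_n(D^2,\R^2)$ with $\Conf_T(S^1,\R^2)$. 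The corollary following Lemma~\ref{lem:disks_to_points} then gives $\pi_1(\Conf_T(S^1,\R^2))\cong\pi_1(\Conf_n(D^2,\R^2))\cong B_n$. Since both groups in question are isomorphic to $B_n$, the lemma follows.

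This is genuinely a base-case computation, so I do not expect a serious obstacle; the only point demanding care is the subspace identification. I need to be sure that the non-nested locus really is a single connected component coinciding with $\Conf_T(S^1,\R^2)$ (which is exactly Proposition~\ref{prop:unlabeled_components}) and that the non-nesting inequality matches disjointness of the bounded disks on the nose (which is exactly Proposition~\ref{prop:semialg}), rather than accidentally admitting tangent circles or mixing in configurations of other nesting types. Once those two citations are invoked correctly, the homeomorphism and the resulting $\pi_1$ computation are immediate.
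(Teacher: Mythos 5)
Your proposal is correct and takes essentially the same approach as the paper: the paper likewise notes $\BAut(T)\cong B_n$ by construction, identifies $\Conf_T(S^1,\R^2)$ with $\Conf_n(D^2,\R^2)$ via the map sending each circle to the closed disk it bounds, and concludes with Lemma~\ref{lem:disks_to_points}. The only cosmetic difference is that you route the identification through the labeled space, Proposition~\ref{prop:semialg}, and Proposition~\ref{prop:unlabeled_components} before quotienting by $S_n$, whereas the paper defines the homeomorphism directly on the unlabeled space; both rest on the same observation that the component of the flat tree is exactly the locus of non-nested configurations.
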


\begin{proof}
First note that $\BAut(T)\cong B_n$ in this case by construction. Now for $C$ a circle in the plane, let $D_C$ be the disk in the plane obtained by taking the union of $C$ with the bounded component of its complement. Since every non-root vertex of $T$ is a child of the root, every configuration $\{C_1,\dots,C_n\}$ in $\UConf_T(S^1,\R^2)$ has the property that no $C_j$ is nested in any $C_i$. Thus, we have a well defined map $\{C_1,\dots,C_n\}\mapsto \{D_{C_1},\dots,D_{C_n}\}$ from $\UConf_T(S^1,\R^2)$ to $\UConf_n(D^2,\R^2)$. This map is clearly a homeomorphism, so $\pi_1(\UConf_T(S^1,\R^2))\cong B_n$ by Lemma~\ref{lem:disks_to_points}.
\end{proof}

An analogous procedure shows that $\pi_1(\Conf_T(S^1,\R^2))\cong \PBAut(T)$ for labeled trees $T$ of this form, thanks to both being isomorphic to $PB_n$.

%-------------------------------------------------
\section{Fundamental groups of configuration spaces of circles}\label{sec:pi1}

In this section we prove our main results. First we focus on the unlabeled case.

\subsection{The unlabeled case}\label{ssec:unlabeled_pi1}

Let us reiterate Theorem~\ref{thrm:main} from the introduction:

\begin{theoremA}
Let $T$ be a tree with $n$ non-root vertices. Then $\UConf_T(S^1,\R^2)$ is aspherical, and its fundamental group is isomorphic to $\BAut(T)$, hence it is a $K(\BAut(T),1)$.
\end{theoremA}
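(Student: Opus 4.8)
The plan is to argue by induction on the number $n$ of non-root vertices of $T$, using a ``forget the interiors'' fibration that mirrors the recursive structure of $\BAut(T)$ in Definition~\ref{def:BAut}. The base case $n=0$ is trivial, since both the space and the group reduce to a point, and Lemma~\ref{lem:small_tree} already disposes of the depth-one case where every non-root vertex is a child of the root. For the inductive step, let $v_1,\dots,v_m$ be the children of the root and let $\Pi$ be the type partition of $\{1,\dots,m\}$ as in Definition~\ref{def:BAut}. Each subtree $T(v_i)$ has strictly fewer non-root vertices than $T$, so the inductive hypothesis applies to every $\Conf_{T(v_i)}(S^1,\R^2)$.

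First I would set up the projection $q\colon \Conf_T(S^1,\R^2)\to B$ that remembers only the top-level circles (the children of the root), each decorated by its type, and forgets the sub-configuration nested inside each one. Since no two top-level circles are nested, each bounds a disk, so the target $B$ is naturally identified with $\LConf_m(D^2,\R^2)/S_m^\Pi$, where the labels are chosen compatibly with $\Pi$. By Lemma~\ref{lem:disks_to_points} the space $\LConf_m(D^2,\R^2)$ is homotopy equivalent to $\LConf_m(*,\R^2)$, which is a $K(PB_m,1)$; since $S_m^\Pi\le S_m$ acts freely, $B$ is aspherical with $\pi_1(B)\cong \pi^{-1}(S_m^\Pi)=B_m^\Pi$. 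The fiber of $q$ is the space of configurations nested inside the chosen disks, namely $\prod_{i=1}^m \Conf_{T(v_i)}(S^1,\operatorname{int}D_i)$, which is homotopy equivalent to $F\coloneqq \prod_{i=1}^m \Conf_{T(v_i)}(S^1,\R^2)$ after rescaling each disk to the plane.

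Next I would verify that $q$ is a locally trivial fiber bundle, hence a fibration. Locally over $B$ one trivializes using the affine maps (translations composed with positive rescalings) that carry each disk to a standard disk; these depend continuously on the center and radius data and transport the nested configuration without any rotation. The structure group therefore reduces to a contractible group of such affine maps together with the finite group permuting same-type disks, so the only monodromy is the permutation of the factors of $F$ dictated by $\pi\colon B_m^\Pi\to S_m^\Pi$. Feeding this bundle into the long exact sequence of homotopy groups, asphericity of $B$ and of $F$ (the latter by the inductive hypothesis) forces $\pi_k(\Conf_T(S^1,\R^2))=0$ for all $k\ge 2$, and yields the short exact sequence
\[
1\to \pi_1(F)\to \pi_1(\Conf_T(S^1,\R^2))\to B_m^\Pi\to 1\text{,}
\]
with $\pi_1(F)\cong \prod_{i=1}^m\BAut(T(v_i))$. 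A section of $q$ is obtained by inserting the fixed standard configuration $\varkappa_{T(v_i)}$ (suitably rescaled) into each disk; this is well defined on $B$ because same-type subtrees determine the same underlying set of standard circles, so the sequence splits. Computing the resulting conjugation action confirms it is exactly the permutation action of Definition~\ref{def:BAut}, giving $\pi_1(\Conf_T(S^1,\R^2))\cong \bigl(\prod_{i=1}^m\BAut(T(v_i))\bigr)\rtimes B_m^\Pi=\BAut(T)$ and closing the induction.

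I expect the main obstacle to be the rigorous verification that $q$ is a genuine fibration in the unlabeled setting: one must construct the colored base $B$ correctly, check local triviality despite the disks varying in both position and radius, and confirm that traversing a loop contributes only a permutation of the fiber factors and no additional ``twisting'' inside a factor. This last point hinges on circles carrying no rotational framing, so that the relevant structure group is contractible up to the permutations recorded by $\Pi$; getting this bookkeeping to align precisely with the semidirect-product action of Definition~\ref{def:BAut} is the crux of the argument.
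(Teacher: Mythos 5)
Your proposal is correct and takes essentially the same route as the paper: induction on the tree, a fiber bundle that records the top circles and forgets what is nested inside them, local triviality via affine (scale-and-translate) maps, the long exact sequence of the bundle for asphericity, and a splitting obtained by inserting the standard configurations $\varkappa_{T(v_i)}$ into the top disks. The only difference is bookkeeping: you fiber over the type-colored base $\LConf_m(D^2,\R^2)/S_m^\Pi$, which has $\pi_1\cong B_m^\Pi$ and connected fibers, whereas the paper fibers over the full unlabeled space $\Conf_\Lambda(S^1,\R^2)\cong\Conf_m(D^2,\R^2)$ with $\pi_1\cong B_m$ and a disconnected fiber, recovering $B_m^\Pi$ as the image of $\pi_1(\Conf_T(S^1,\R^2))$ via the $\pi_0(F)$ term of the exact sequence.
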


\begin{remark}\label{rmk:cw}
We should mention that sometimes the definition of ``$K(G,1)$'' includes a requirement that the space be homotopy equivalent to a CW-complex. It is clear that our spaces have this property. Indeed, $\Conf_n(S^1,\R^2)$ is semialgebraic by Proposition~\ref{prop:semialg} and hence triangulable by \cite{hironaka75}, and $\UConf_n(S^1,\R^2)$ is the orbit space of this under a linear action of the finite group $S_n$, hence is triangulable, for example by \cite{procesi85}. The same therefore holds for each connected component $\Conf_T(S^1,\R^2)$ and $\UConf_T(S^1,\R^2)$.
\end{remark}

To prove the theorem, we will induct on $n$, and the key will be that we are free to view configurations as happening in an open disk rather than in the entire plane. This is justified by the following.

\begin{lemma}\label{lem:configs_in_disk}
Let $D$ be an open disk in the plane. Then $\UConf_n(S^1,D)$ and $\UConf_n(S^1,\R^2)$ are homotopy equivalent. For $T$ a tree with $n$ non-root vertices, setting $\UConf_T(S^1,D) \coloneqq \UConf_n(S^1,D) \cap \UConf_T(S^1,\R^2)$, we also have a homotopy equivalence from $\UConf_T(S^1,D)$ to $\UConf_T(S^1,\R^2)$. Analogous statements hold in the labeled case.
\end{lemma}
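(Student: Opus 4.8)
The plan is to reduce to the case of the open unit disk and then build an explicit homotopy inverse to the inclusion out of homotheties (scalings) centered at the origin. The key conceptual point, easy to overlook, is that we may \emph{not} use an arbitrary homeomorphism $\R^2\cong D$: such a map need not send round circles to round circles, so it does not act on these configuration spaces at all. Only similarities do, and scalings are the similarities we need.

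First I would reduce to $D=D(0,1)$. If $D=D(c,R)$, the similarity $\phi(p)=(p-c)/R$ carries round circles to round circles and preserves disjointness, the nesting relation $\vdash$, and (in the labeled case) labels; hence it induces a self-homeomorphism of $\Conf_n(S^1,\R^2)$ carrying $\Conf_n(S^1,D)$ homeomorphically onto $\Conf_n(S^1,D(0,1))$ and commuting with the respective inclusions, so nothing is lost in assuming $D$ is the open unit disk. The main tool is then, for $\mu>0$, the scaling $s_\mu\colon\R^2\to\R^2$, $s_\mu(p)=\mu p$. Being a similarity, $s_\mu$ sends configurations to configurations preserving disjointness, $\vdash$, and labels, so it induces a self-homeomorphism of $\Conf_n(S^1,\R^2)$ that fixes each component $\Conf_T(S^1,\R^2)$ setwise, with the evident labeled analogue. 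Writing $M(\kappa)=\max_i(\|c_i\|+r_i)$ for the largest distance from the origin attained on the circles of $\kappa$ (a continuous, $S_n$-invariant function, hence well defined on both the labeled and unlabeled spaces), one has $\kappa\subset D$ iff $M(\kappa)<1$, together with the scaling identity $M(s_\mu\kappa)=\mu M(\kappa)$.

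Next I would construct the homotopy inverse. Set $\lambda(\kappa)=1/(M(\kappa)+1)\in(0,1)$, a continuous function, and let $r(\kappa)=s_{\lambda(\kappa)}(\kappa)$. Since $M(r(\kappa))=\lambda(\kappa)M(\kappa)<1$, the map $r$ takes values in $\Conf_n(S^1,D)$. Define $H(\kappa,t)=s_{(1-t)+t\lambda(\kappa)}(\kappa)$, so $H_0=\id$ and $H_1=\iota\circ r$, where $\iota$ is the inclusion. The scale factor $(1-t)+t\lambda(\kappa)$ lies in $(0,1]$, and scaling always preserves configurations, so $H$ is a homotopy in $\Conf_n(S^1,\R^2)$, giving $\iota\circ r\simeq\id$. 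Restricting $H$ to $\Conf_n(S^1,D)$: if $\kappa\subset D$ and $\mu\in(0,1]$ then $M(s_\mu\kappa)=\mu M(\kappa)<1$, so $s_\mu\kappa\subset D$; hence the very same $H$ stays inside $\Conf_n(S^1,D)$ and yields $r\circ\iota\simeq\id$. Thus $\iota$ is a homotopy equivalence.

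Finally, since $s_\mu$ preserves nesting type and labels, both $r$ and $H$ restrict to each $T$-component and to the labeled spaces. In particular $r$ maps $\Conf_T(S^1,\R^2)$ into $\Conf_n(S^1,D)\cap\Conf_T(S^1,\R^2)=\Conf_T(S^1,D)$, delivering the homotopy equivalence $\Conf_T(S^1,D)\simeq\Conf_T(S^1,\R^2)$ and its labeled analogues. The only place that requires genuine care is verifying that the homotopies never leave the relevant space, and this is handled uniformly by the monotonicity $M(s_\mu\kappa)=\mu M(\kappa)\le M(\kappa)$ for $\mu\le 1$; the same identity is what lets $r$ move every point yet remain homotopic to the identity. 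I therefore expect this bookkeeping to be the main (but routine) obstacle, with the rest of the argument being formal.
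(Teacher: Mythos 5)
Your proof is correct and takes essentially the same approach as the paper: both construct a homotopy inverse to the inclusion out of similarities of the plane (which preserve roundness, disjointness, nesting, and labels) and interpolate the scaling continuously to obtain the two required homotopies. The only cosmetic difference is that the paper shrinks a configuration to a canonical ``barely contained'' representative in a fixed sub-disk of $D$ (using scaling and translation), whereas you first normalize $D$ to the unit disk by a fixed similarity and then scale by the explicit configuration-dependent factor $\lambda(\kappa)=1/(M(\kappa)+1)$; the bookkeeping that the homotopies stay in the relevant spaces is the same in both.
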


\begin{proof}
Without loss of generality $D$ is centered at the origin. Denote the radius of $D$ by $R$. We will prove that both $\UConf_n(S^1,D)$ and $\UConf_n(S^1,\R^2)$ deformation retract onto a certain subspace. For $\kappa=\{C_1,\dots,C_n\}$ in $\UConf_n(S^1,\R^2)$, let $\delta(\kappa)$ be the maximum distance in $\R^2$ from some point in some $C_i$ to the origin, so $\delta(\kappa)>0$ and $\delta(\kappa)$ varies continuously with $\kappa$. Let $X$ be the subspace of configurations $\kappa$ with $\delta(\kappa)=R/2$. At this point we have $X\subseteq \UConf_n(S^1,D)\subseteq \UConf_n(S^1,\R^2)$, and we want to deformation retract both spaces onto $X$. For $t\in[0,1]$ let $f_t\colon \UConf_n(S^1,\R^2)\to \UConf_n(S^1,\R^2)$ send $\kappa=\{C_1,\dots,C_n\}$ to the configuration
\[
\left\{\left((1-t) + t\frac{R/2}{\delta(\kappa)}\right)C_1,\dots,\left((1-t) + t\frac{R/2}{\delta(\kappa)}\right)C_n\right\}\text{.}
\]
The family of maps $f_t$ is a straight line homotopy, such that $f_0$ is the identity, each $f_t$ is the identity on $X$, and the image of $f_1$ lies in $X$, hence this is a strong deformation retraction from $\UConf_n(S^1,\R^2)$ onto $X$. The same family of maps also gives a strong deformation retraction from $\UConf_n(S^1,D)$ onto $X$, and so we conclude that $\UConf_n(S^1,D)$ and $\UConf_n(S^1,\R^2)$ are homotopy equivalent. The same argument works in the presence of a tree $T$ and/or in the labeled case, since scaling does not change the property of one circle being nested in another, nor any chosen labeling of the circles.
\end{proof}

A key tool in the proof of Theorem~\ref{thrm:main} will be fiber bundles. For completeness let us recall the definition now. Background can be found for example in \cite[Pages~375--377]{hatcher02}. Let $\phi\colon E\to B$ be a continuous surjection of topological spaces and $F$ a topological space. We call $E$ a \emph{fiber bundle} over $B$ (with respect to $\phi$, and with fiber $F$) if for all $b\in B$ there exists an open neighborhood $U$ of $b$ and a homeomorphism $h\colon \phi^{-1}(U)\to U\times F$ such that the projection $U\times F\to U$ composed with $h$ equals the restriction of $\phi$ to $\phi^{-1}(U)$. Equivalently, there exists a map $\psi\colon \phi^{-1}(U)\to F$ such that $\phi\times\psi$ restricts to a homeomorphism $\phi^{-1}(U)\to U\times F$. Intuitively, $E$ being a fiber bundle over $B$ with fiber $F$ means that $E$ locally resembles $B\times F$. Note that for each $b\in B$ (and any choice of $U$), the homeomorphism $h$ must take $\phi^{-1}(b)$ homeomorphically to $F_b\coloneqq \{b\}\times F$. In particular, all the $\phi^{-1}(b)$ are homeomorphic to each other, and to $F$. If we have a fixed basepoint $b_0\in B$, we will often identify $F$ with $F_{b_0}$, and so refer to $F$ as a subspace of $E$.

Given a fiber bundle as above, we get a long exact sequence in homotopy groups. More precisely, choosing some $b_0\in B$ and $x_0\in F$ with $\phi(x_0)=b_0$, assuming $B$ is path connected for simplicity, and identifying $F$ with $F_{b_0}\subseteq E$, we get a long exact sequence
\[
\cdots\to \pi_k(F,x_0)\to \pi_k(E,x_0)\to \pi_k(B,b_0)\to \pi_{k-1}(F,x_0)\to\cdots \to \pi_0(E,x_0)\to 0\text{,}
\]
where the maps in a given degree are induced by the inclusion map $F\to E$ and by the map $\phi\colon E\to B$. See \cite[Theorem~4.41]{hatcher02}. While $\pi_0$ is not a group, exactness still makes sense since there is an image and a kernel (the kernel is the preimage of the trivial element).

\medskip

Now we can begin proving our main result, Theorem~\ref{thrm:main}, that $\UConf_T(S^1,\R^2)$ is aspherical and its fundamental group is isomorphic to $\BAut(T)$. As a remark, it is somewhat more traditional to use fiber bundle arguments when dealing with labeled configurations, for example in the original Fadell--Neuwirth paper \cite{fadell1962configuration}. In our situation however, it will turn out to be much easier to deduce the labeled case from the unlabeled case than vice versa, and so we will start with the unlabeled case.

If $n=0$ then the theorem follows from the trivial fact that a single point is contractible, so from this point on we assume $n\ge 1$.

\begin{construction}[Construction of the map $\phi$]\label{con:fiber}
Given an unlabeled configuration $\{C_1,\dots,C_n\}$ in $\UConf_T(S^1,\R^2)$, call $C_i$ a \emph{top} circle if it is not nested in any $C_j$, i.e, if $\R^2\vdash C_i$. The number of top circles in any configuration in $\UConf_T(S^1,\R^2)$ equals the number of children of the root $\varnothing$ of $T$, call this $m$. Since $n\ge 1$, we know $m\ge 1$. Let $\Lambda$ be the induced subtree of $T$ consisting of just the root and its children. If $\{v_1,\dots,v_m\}$ is the set of children of the root, then $T$ is the union of $\Lambda$ with all the $T(v_i)$ for $1\le i\le m$.

Define a surjective map
\[
\phi \colon \UConf_T(S^1,\R^2) \to \UConf_\Lambda(S^1,\R^2)
\]
by sending a configuration to the configuration consisting only of its top circles. Set $b_0=\kappafixed_\Lambda$ and $x_0=\kappafixed_T$, so $\phi(x_0)=b_0$, and let $F$ consist of all configurations in $\UConf_T(S^1,\R^2)$ whose top circles are precisely $\Cfixed_1^\Lambda,\dots,\Cfixed_m^\Lambda$, so $F$ is the fiber of $b_0$. Identify $F$ with its image in $\UConf_{n-m}(S^1,\R^2)$ obtained by deleting these top circles. Writing $\underline{D}_i^\Lambda$ for the open disk whose boundary is $\Cfixed_i^\Lambda$, under this identification the connected component of $\kappafixed_T$ in $F$ is identified with a subspace homeomorphic to
\[
\UConf_{T(v_1)}(S^1,\underline{D}_1^\Lambda)\times\cdots\times \UConf_{T(v_m)}(S^1,\underline{D}_m^\Lambda)\text{.}
\]
\end{construction}

\begin{proposition}\label{prop:fiber}
The map $\phi$ makes $E=\UConf_T(S^1,\R^2)$ a fiber bundle over $B=\UConf_\Lambda(S^1,\R^2)$, with fiber $F$.
\end{proposition}

\begin{proof}
Fix $\kappa\in \UConf_\Lambda(S^1,\R^2)$, say $\kappa=\{C_1,\dots,C_m\}$. Choose a path in $\UConf_\Lambda(S^1,\R^2)$ from $\kappa$ to $\underline{\kappa}_\Lambda$, and note that this path induces a bijection from $\{C_1,\dots,C_m\}$ to $\{\underline{C}_1^\Lambda,\dots,\underline{C}_m^\Lambda\}$. Choose the subscript notation so that this bijection is $C_i\mapsto \underline{C}_i^\Lambda$. Choose $\varepsilon>0$ such that for all $i\ne j$ the centers of $C_i$ and $C_j$ are farther than distance $2\varepsilon$ apart. Call $\kappa'\in \UConf_\Lambda(S^1,\R^2)$ an \emph{$\varepsilon$-perturbation} of $\kappa$ if for each $C'\in\kappa'$ there exists $C_i\in\kappa$ such that the centers of $C_i$ and $C'$ differ by less than $\varepsilon$ and the radii of $C_i$ and $C'$ differ by less than $\varepsilon$. By our choice of $\varepsilon$, each $C'$ in $\kappa'$ satisfies this condition for a unique $i$, and the function $\kappa'\to\kappa$ sending $C'$ to this $C_i$ is a bijection. Denote by $C_i'$ the element of $\kappa'$ associated to $C_i$ under this bijection, and from now on when we write an $\varepsilon$-perturbation of $\kappa$ as $\kappa'=\{C_1',\dots,C_m'\}$, the above bijection is understood to be $C_i'\mapsto C_i$. Note that this also yields a fixed bijection $\kappa'\to\underline{\kappa}_\Lambda$ given by $C_i'\mapsto \underline{C}_i^\Lambda$. Given an $\varepsilon$-perturbation $\kappa'$ of $\kappa$, let $\alpha_i^{\kappa'}$ be the affine transformation of $\R^2$ that scales and translates taking $C_i'$ to $\Cfixed_i^\Lambda$. This is uniquely determined by $\kappa'$, since it must scale to change the radius of $C_i'$ to that of $\Cfixed_i^\Lambda$ and then translate the center of $C_i'$ to the center of $\Cfixed_i^\Lambda$, and so the $\alpha_i^{\kappa'}$ vary continuously with $\kappa'$ in the space of affine transformations of $\R^2$.

Let $U\subseteq B$ be the open neighborhood of $\kappa$ consisting of all $\varepsilon$-perturbations of $\kappa$, so $\phi^{-1}(U)$ consists of all configurations in $\UConf_T(S^1,\R^2)$ whose top circles form an $\varepsilon$-perturbation of $\kappa$. Let $\psi\colon \phi^{-1}(U)\to F$ be the map that takes a configuration, say with configuration of top circles the $\varepsilon$-perturbation $\kappa'=\{C_1',\dots,C_m'\}$, and for each $1\le i\le m$ applies $\alpha_i^{\kappa'}$ to $C_i'$ and all circles in the configuration that are nested in $C_i'$. The output of this procedure is a configuration whose top circles are $\Cfixed_1^\Lambda,\dots,\Cfixed_m^\Lambda$, i.e., an element of $F$. Note that $\psi$ is continuous, since the $\alpha_i^{\kappa'}$ are all individually continuous, and they vary continuously with $\kappa'$.

Let $h=\phi\times\psi \colon \phi^{-1}(U) \to U\times F$, so $h$ is continuous, and we claim it is a homeomorphism. We construct the inverse map directly. An element of $U\times F$ is an ordered pair $(\kappa',\widehat{\kappa})$, whose first entry is an $\varepsilon$-perturbation $\kappa'=\{C_1',\dots,C_m'\}$ of $\kappa$, and whose second entry $\widehat{\kappa}$ is the result of taking a configuration with top circles $\Cfixed_1^\Lambda,\dots,\Cfixed_m^\Lambda$ and deleting these top circles. Let $g\colon U\times F \to \phi^{-1}(U)$ send $(\kappa',\widehat{\kappa})$ to the configuration whose top circles are $C_1',\dots,C_m'$ and that otherwise consists of, for each $i$, the result of taking the circles of $\widehat{\kappa}$ nested in $\Cfixed_i^\Lambda$ and applying $(\alpha_i^{\kappa'})^{-1}$ to them, so that they become nested in $C_i'$. Note that $g$ is continuous since, similar to the proof of continuity of $\psi$, the $(\alpha_i^{\kappa'})^{-1}$ vary continuously with $\kappa'$. We have that $h\circ g$ and $g\circ h$ are the identity maps by construction, so $h$ and $g$ are homeomorphisms. Finally, postcomposing $h$ with the projection $U\times F\to U$ yields $\phi$ by construction, so we are done.
\end{proof}

\begin{proof}[Proof of Theorem~\ref{thrm:main}]
Our goal is to prove that $\UConf_T(S^1,\R^2)$ is aspherical and its fundamental group is isomorphic to $\BAut(T)$. We retain the notation from the proof of Proposition~\ref{prop:fiber}. Note that $\UConf_\Lambda(S^1,\R^2)\cong \UConf_m(D^2,\R^2)$ by Lemma~\ref{lem:small_tree}, so by Lemma~\ref{lem:disks_to_points}, it is homotopy equivalent to $\UConf_m(*,\R^2)$. In particular it is aspherical, so from the long exact sequence we get that $\pi_k(F,\kappafixed_T)\cong \pi_k(\UConf_T(S^1,\R^2),\kappafixed_T)$ for all $k\ge 2$. The connected component of $F$ containing $\kappafixed_T$ is homotopy equivalent to $\UConf_{T(v_1)}(S^1,\R^2)\times\cdots\times \UConf_{T(v_m)}(S^1,\R^2)$ by Lemma~\ref{lem:configs_in_disk}, which by induction is a product of aspherical spaces, hence is aspherical, so we conclude that $\UConf_T(S^1,\R^2)$ is aspherical.

Now we compute the fundamental group. Since $\UConf_\Lambda(S^1,\R^2)$ is aspherical, we have an exact sequence
\[
0\to \pi_1(F,\kappafixed_T) \to \pi_1(\UConf_T(S^1,\R^2),\kappafixed_T) \to \pi_1(\UConf_\Lambda(S^1,\R^2),\kappafixed_\Lambda)\to \pi_0(F,\kappafixed_T)\to 0 \text{.}
\]
Here we have used the fact that $\UConf_T(S^1,\R^2)$ is connected. As we said previously, $\UConf_\Lambda(S^1,\R^2)\simeq \UConf_m(*,\R^2)$, so $\pi_1(\UConf_\Lambda(S^1,\R^2),\kappafixed_\Lambda)\cong B_m$. An element of this group is represented by a path $\gamma(t)$ ($0\le t\le 1$) in $\UConf_\Lambda(S^1,\R^2)$ from $\kappafixed_\Lambda$ to itself. The standard map from $B_m$ to the symmetric group $S_m$ assigns to $\gamma(t)$ a bijection $\sigma_\gamma$ from $\kappafixed_\Lambda$ to itself. For each $1\le i\le m$ let $C_i(t)$ be the path of circles from $\Cfixed_i^\Lambda$ to $\Cfixed_{\sigma_\gamma(i)}^\Lambda$ induced by $\gamma$, so $\gamma(t)=\{C_1(t),\dots,C_m(t)\}$ for all $0\le t\le 1$. For each $1\le i\le m$ let $\kappafixed_{T(v_i)}^{C_i(t)}$ be the image of $\kappafixed_{T(v_i)}$ under the map that scales and translates the unit disk to the disk bounded by $C_i(t)$. Define $\widetilde{\gamma}(t)=\gamma(t)\cup \bigcup_{i=1}^m \kappafixed_{T(v_i)}^{C_i(t)}$, so by Construction~\ref{con:varkappa} $\widetilde{\gamma}(t)$ is a path in $\UConf_T(S^1,\R^2)$ starting at $\widetilde{\gamma}(0)=\kappafixed_T$. The element of $B_m$ represented by $\gamma(t)$ lies in the image of $\pi_1(\UConf_T(S^1,\R^2),\kappafixed_T)\to \pi_1(\UConf_\Lambda(S^1,\R^2),\kappafixed_\Lambda)$ if and only if $\widetilde{\gamma}(1)=\kappafixed_T$. This happens if and only if $v_i$ and $v_{\sigma_\gamma(i)}$ have the same type for each $1\le i\le m$. We conclude this image is $B_m^\Pi$, where $\Pi$ is the partition of $\{1,\dots,m\}$ in which $i$ and $j$ share a block if and only if $v_i$ and $v_j$ have the same type.

By now we have a short exact sequence
\[
0\to \pi_1(F,\kappafixed_T) \to \pi_1(\UConf_T(S^1,\R^2),\kappafixed_T) \to B_m^\Pi\to 0 \text{.}
\]
Again by Lemma~\ref{lem:configs_in_disk}, the connected component of $F$ containing $\kappafixed_T$ is homotopy equivalent to $\UConf_{T(v_1)}(S^1,\R^2)\times\cdots\times \UConf_{T(v_m)}(S^1,\R^2)$. Moreover, under this homotopy equivalence the basepoint $\kappafixed_T$ is identified with $(\kappafixed_{T(v_1)},\dots,\kappafixed_{T(v_m)})$ thanks to Construction~\ref{con:varkappa}. Thus, by induction, our short exact sequence becomes
\[
0\to \BAut(T(v_1))\times\cdots\times \BAut(T(v_m)) \to \pi_1(\UConf_T(S^1,\R^2),\kappafixed_T) \to B_m^\Pi\to 0 \text{.}
\]
The map $\pi_1(\UConf_T(S^1,\R^2),\kappafixed_T) \to B_m^\Pi$ splits, since sending the element of $B_m^\Pi$ represented by a path $\gamma(t)$ to the element of $\pi_1(\UConf_T(S^1,\R^2),\kappafixed_T)$ represented by the path $\widetilde{\gamma}(t)$ constructed above gives a well defined group homomorphism. By now we know that our group of interest is isomorphic to
\[
\BAut(T(v_1))\times\cdots\times \BAut(T(v_m)) \rtimes B_m^\Pi\text{,}
\]
which is exactly the form for $\BAut(T)$ in Definition~\ref{def:BAut}. The only thing left to prove is that the action of $B_m^\Pi$ on $\BAut(T(v_1))\times\cdots\times \BAut(T(v_m))$ here is the same as the action of $B_m^\Pi$ on $\BAut(T(v_1))\times\cdots\times \BAut(T(v_m))$ in the definition of $\BAut(T)$, i.e., it is by permuting entries. Indeed, as explained above, the element of $B_m^\Pi$ represented by $\gamma(t)$ acts via the induced bijection $\sigma_\gamma\in S_m$.
\end{proof}

\subsection{The labeled case}\label{ssec:labeled_pi1}

With the unlabeled case in hand, the labeled case becomes quite easy thanks to standard covering theory. For a labeled tree $T$, we will write $\PBAut(T)$ to mean the pure braided automorphism group of its underlying tree. Since the pure case requires a trivial associated automorphism, whether or not the tree is labeled is of no consequence.

\begin{theorem}\label{thrm:iso_pure}
Let $T$ be a labeled tree with $n$ non-root vertices. Then $\Conf_T(S^1,\R^2)$ is aspherical, and its fundamental group is isomorphic to $\PBAut(T)$.
\end{theorem}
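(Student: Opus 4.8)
The plan is to deduce the labeled statement from the already-proven unlabeled Theorem~\ref{thrm:main} using covering space theory, exploiting the fact that the covering map $p\colon \LConf_n(S^1,\R^2)\to\Conf_n(S^1,\R^2)$ restricts to a covering of the relevant connected components. First I would observe that $\LConf_T(S^1,\R^2)$ is a connected component of $\LConf_n(S^1,\R^2)$, and that $p$ restricts to a covering map $p\colon\LConf_T(S^1,\R^2)\to\Conf_T(S^1,\R^2)$, where here we abuse notation by writing $T$ for both the labeled tree and its underlying tree. This is a covering because $p$ is an $S_n$-covering (the action of $S_n$ on the labeled space is free and properly discontinuous, as distinct labelings of a fixed underlying set are distinct points), and restricting a covering to the preimage of a connected component, then to a single connected component upstairs, is again a covering.

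The asphericity is then immediate: a covering space of an aspherical space is aspherical, since $\pi_k$ is a homotopy invariant that is unchanged by passing to covers for $k\ge 2$ (the covering map induces isomorphisms $\pi_k(\LConf_T(S^1,\R^2))\cong\pi_k(\Conf_T(S^1,\R^2))$ for all $k\ge 2$), and these vanish by Theorem~\ref{thrm:main}. So $\LConf_T(S^1,\R^2)$ is aspherical, and it remains only to identify its fundamental group. For this, the inclusion of fundamental groups induced by the covering realizes $\pi_1(\LConf_T(S^1,\R^2))$ as a subgroup of $\pi_1(\Conf_T(S^1,\R^2))\cong\BAut(T)$, and I claim it is exactly the kernel of $\pi\colon\BAut(T)\to\Aut(T)$, which is $\PBAut(T)$ by Definition~\ref{def:PBAut}.

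The key step, and the one requiring genuine care, is identifying this subgroup with $\PBAut(T)$. The idea is that a loop in $\Conf_T(S^1,\R^2)$ based at $\varkappa_T$ lifts to a loop in $\LConf_T(S^1,\R^2)$ (rather than a path ending at a different point of the fiber) precisely when the braiding of the circles returns each labeled circle to its own starting position set-wise, i.e.\ when the induced permutation of the circles is trivial. Under the isomorphism $\pi_1(\Conf_T(S^1,\R^2))\cong\BAut(T)$ constructed in the proof of Theorem~\ref{thrm:main}, the permutation induced on the circles of $\varkappa_T$ by a loop is exactly the image of the corresponding element of $\BAut(T)$ under the map $\pi\colon\BAut(T)\to\Aut(T)$. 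Indeed, the fiber of $\varkappa_T$ under $p$ is the set of $\varkappa_{\widetilde{T}}$ as $\widetilde{T}$ ranges over labelings of the underlying tree, and the monodromy action of $\pi_1(\Conf_T(S^1,\R^2))$ on this fiber factors through the action of $\Aut(T)$ on labelings via $\pi$; a loop lifts to a loop exactly when it acts trivially, i.e.\ lies in $\ker\pi=\PBAut(T)$. Thus $\pi_1(\LConf_T(S^1,\R^2))\cong\PBAut(T)$.

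The main obstacle I anticipate is making the compatibility in the previous paragraph fully rigorous, namely verifying that the monodromy of the covering $p$ agrees with the map $\pi\colon\BAut(T)\to\Aut(T)$ under the identification $\pi_1(\Conf_T(S^1,\R^2))\cong\BAut(T)$. One clean way to sidestep a lengthy direct argument is to proceed inductively in parallel with the proof of Theorem~\ref{thrm:main}: the bundle $\phi\colon\Conf_T(S^1,\R^2)\to\Conf_\Lambda(S^1,\R^2)$ lifts to a bundle in the labeled setting, and at the base level one knows $\pi_1(\LConf_\Lambda(S^1,\R^2))\cong PB_m$ sits inside $\pi_1(\Conf_\Lambda(S^1,\R^2))\cong B_m$ as the kernel of $\pi\colon B_m\to S_m$ by the classical labeled-versus-unlabeled point configuration theory (via Lemma~\ref{lem:disks_to_points}). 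Feeding this through the same long exact sequence argument, with each $B_m^\Pi$ replaced by $PB_m$ and each direct factor replaced inductively by $\PBAut(T(v_i))$, yields the product decomposition $\PBAut(T)\cong PB_{d_1}\times\cdots\times PB_{d_n}\times PB_m$ of Definition~\ref{def:PBAut} directly. Either route works; I would present the covering-theoretic argument as the conceptual reason and note that the inductive bundle argument gives the explicit product form for free.
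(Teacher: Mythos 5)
Your proposal is correct and follows essentially the same route as the paper: the paper's proof also deduces asphericity from the fact that a covering space of an aspherical space is aspherical, and identifies $\pi_1(\LConf_T(S^1,\R^2))$ via covering theory with the kernel of $\pi\colon\BAut(T)\to\Aut(T)$, i.e.\ with $\PBAut(T)$. Your discussion of the monodromy compatibility (and the optional inductive bundle argument) simply makes explicit the details the paper compresses into the phrase ``by covering theory together with the correspondence in the unlabeled case.''
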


\begin{proof}
Write $\overline{T}$ for the underlying tree of $T$. Since $\Conf_T(S^1,\R^2)$ is a covering space of $\UConf_{\overline{T}}(S^1,\R^2)$, which is aspherical by Theorem~\ref{thrm:main}, we know that $\Conf_T(S^1,\R^2)$ is also aspherical.

In order to understand the fundamental group we need some more information about this covering. First we have the covering $\Conf_n(S^1,\R^2)\to \UConf_n(S^1,\R^2)$, which sends $(C_1,\dots,C_n)$ to $\{C_1,\dots,C_n\}$, so this has deck group $S_n$, acting by permuting the $C_i$. Restricting to the covering map $\Conf_T(S^1,\R^2)\to \UConf_{\overline{T}}(S^1,\R^2)$, the deck group is the subgroup of $S_n$ that, in permuting the $C_i$, preserves the nesting relation. This subgroup of $S_n$ is precisely $\Aut(\overline{T})$. This action is transitive on the fiber in $\Conf_T(S^1,\R^2)$ of any given $\{C_1,\dots,C_n\}$, hence the covering is a regular covering. By \cite{hatcher02}[Proposition~1.39] we get that $\pi_1(\Conf_T(S^1,\R^2))$ is isomorphic to the kernel of $\pi_1(\UConf_{\overline{T}}(S^1,\R^2))\to \Aut(\overline{T})$. The isomorphism $\pi_1(\UConf_{\overline{T}}(S^1,\R^2))\cong \BAut(\overline{T})$ constructed in the proof of Theorem~\ref{thrm:main} is compatible with the respective maps to $\Aut(\overline{T})$, and so we get that $\pi_1(\Conf_T(S^1,\R^2))$ is isomorphic to the kernel of $\pi\colon \BAut(\overline{T})\to \Aut(\overline{T})$, which is $\PBAut(T)$.
\end{proof}

\bibliographystyle{alpha}
\newcommand{\etalchar}[1]{$^{#1}$}

\end{document}